%% file: main.tex
\documentclass[11pt,a4paper]{amsart}

\usepackage[
  left=25mm,
  right=25mm,
  top=20mm,
   bottom=20mm
]{geometry} %

\usepackage{microtype}
\usepackage{etoolbox}
\usepackage{blkarray}
\usepackage{booktabs}

\usepackage{url,amssymb,enumerate,colonequals}

\RequirePackage{iftex}
\ifLuaTeX
  \RequirePackage{luatex85}
  \ifdefined\pdftexversion\else\chardef\pdftexversion=140\fi
\fi

\usepackage[all]{xy}
\usepackage[all]{xy} %
\usepackage{mathrsfs} %
\usepackage[dvipsnames]{xcolor}
\usepackage{tikz-cd}
\usepackage[normalem]{ulem} %
\usepackage{textcomp}
\usepackage{relsize}   %

\definecolor{linkblue}{RGB}{20,60,130}
\usepackage[
pdfauthor={Hidde Lammert Bakker and Emre Can Sertöz},
pdftitle={A generalised cross-ratio and limits of local heights},
colorlinks=true,
allcolors=linkblue
]{hyperref}

\makeatletter
\renewcommand\subsection{\@startsection{subsection}{2}{\z@}
                                   {-1.5ex \@plus -.5ex \@minus -.2ex}%
                                   {1.0ex \@plus.2ex}%
                                   {\normalfont\itshape}}%
\makeatother
\makeatletter
\g@addto@macro\bfseries{\boldmath}
\makeatother

\newcounter{nodenum}[section] 
\renewcommand{\thenodenum}{\thesubsection.\arabic{nodenum}} 
\newcommand{\node}[1]{\par\refstepcounter{nodenum}\vspace{1ex}\noindent(\thenodenum)\space \textbf{#1}}

\makeatletter

\@addtoreset{equation}{section} %
\makeatother

\usepackage[style=ieee-alphabetic,backend=biber, isbn=false, opcittracker=true,
  url=true
]{biblatex}
\AtEveryBibitem{%
  \clearfield{month}%
}

\usepackage{color}
\usepackage{marvosym} 

\usepackage[OT2,T1]{fontenc}
\DeclareSymbolFont{cyrletters}{OT2}{wncyr}{m}{n}
\DeclareMathSymbol{\Sha}{\mathalpha}{cyrletters}{"58}

\newcommand{\C}{\mathbb{C}}

\newcommand{\G}{\mathbb{G}}
\renewcommand{\H}{\operatorname{H}}

\newcommand{\Pp}{\mathbb{P}}
\newcommand{\p}{\mathbb{P}^1}

\newcommand{\pn}{\mathbb{P}^{n}}
\newcommand{\pv}{\mathbb{P}(V)}
\newcommand{\Q}{\mathbb{Q}}

\newcommand{\R}{\mathbb{R}}

\newcommand{\Z}{\mathbb{Z}}
\newcommand{\Zloc}{\mathbf{Z}} 

\newcommand{\xc}{\mathfrak{c}}

\newcommand{\xm}{\mathfrak{m}}

\makeatletter
\newcommand\RedeclareMathOperator{%
  \@ifstar{\def\rmo@s{m}\rmo@redeclare}{\def\rmo@s{o}\rmo@redeclare}%
}
\newcommand\rmo@redeclare[2]{%
  \begingroup \escapechar\m@ne\xdef\@gtempa{{\string#1}}\endgroup
  \expandafter\@ifundefined\@gtempa
     {\@latex@error{\noexpand#1undefined}\@ehc}%
     \relax
  \expandafter\rmo@declmathop\rmo@s{#1}{#2}}
\newcommand\rmo@declmathop[3]{%
  \DeclareRobustCommand{#2}{\qopname\newmcodes@#1{#3}}%
}
\@onlypreamble\RedeclareMathOperator
\makeatother

\DeclareMathOperator{\Char}{char}

\DeclareMathOperator{\Ext}{Ext}

\DeclareMathOperator{\Gr}{Gr}

\RedeclareMathOperator{\hom}{hom}

\DeclareMathOperator{\proj}{Proj}

\DeclareMathOperator{\rank}{rank}

\DeclareMathOperator{\Spec}{Spec}

\DeclareMathOperator{\Sym}{Sym}

\DeclareMathOperator{\diag}{diag}
\DeclareMathOperator{\reg}{reg}

\newcommand{\sat}{{\operatorname{sat}}}

\renewcommand{\span}{\operatorname{span}}

\newcommand{\dd}{\mathrm{d}}

\usepackage{mathtools}

\theoremstyle{plain}
\newtheorem{theorem}{Theorem}[section]
\newtheorem{proposition}[theorem]{Proposition}
\newtheorem{lemma}[theorem]{Lemma}

\newtheorem{introtheorem}{Theorem}       

\newtheorem{corBprime}{Corollary}          
\newtheorem{corCprime}{Corollary}          
\newtheorem{corCsecond}{Corollary}         

\theoremstyle{definition}
\newtheorem{definition}[theorem]{Definition}

\theoremstyle{remark}
\newtheorem{remark}[theorem]{Remark}

\newcommand{\abs}[1]{\lvert #1 \rvert}

\newcommand{\set}[1]{\{ #1 \}}

\newcommand{\tpip}{(2\pi{\tt i})}

\DeclareMathOperator{\id}{id}

\DeclareMathOperator{\MHS}{MHS}
\newcommand{\ZMHS}{\Z\text{-}\MHS}
\newcommand{\RMHS}{\R\text{-}\MHS}

\newcommand{\ca}{\mathcal{A}}
\newcommand{\cb}{\mathcal{B}}

\newcommand{\ch}{\mathcal{H}}

\newcommand{\ci}{\mathcal{I}}

\newcommand{\cl}{\mathcal{L}}

\newcommand{\co}{\mathcal{O}}
\newcommand{\cq}{\mathcal{Q}}

\newcommand{\cs}{\mathcal{S}}

\newcommand{\cw}{\mathcal{W}}
\newcommand{\cv}{\mathcal{V}}

\newcommand{\too}{\longrightarrow}
\newcommand{\tos}{\twoheadrightarrow}
\newcommand{\toi}{\hookrightarrow}

\newcommand{\coleq}{\colonequals}

\newcommand{\xci}[1][]{\xc_{\mathrm{lim}#1}} 
\newcommand{\xca}[1][]{\xc_{\mathrm{ar}#1}} 

\newcounter{arrowcounter}

\newcommand{\circled}[1]{%
  \tikz[baseline=(char.base)]{%
    \node[draw, circle, inner sep=1pt] (char){\ifmmode #1\else\scriptsize #1\fi};%
  }%
}

\newcommand{\circledref}[1]{%
  {\begingroup
     \hypersetup{hidelinks}%
     \hyperref[#1]{\circled{\ref*{#1}}}%
   \endgroup}%
}

\author[H.~L.~Bakker]{Hidde Lammert Bakker}
\address{Hidde Lammert Bakker, Faculty of Electrical Engineering, Mathematics and Computer Science, Delft University of Technology, Mekelweg 4, 2628 CD Delft, The Netherlands}
\email{math@hlbakker.nl}

\author[E.~C.~Sertöz]{Emre Can Sertöz}
\address{Emre Can Sertöz, Mathematical Institute, Leiden University, Einsteinweg 55, 2333 CC Leiden, The Netherlands}
\email{emre@sertoz.com}
\urladdr{\url{https://emresertoz.com}}

\title[Generalised cross-ratio]{A generalised cross-ratio and limits of local heights}
\subjclass[2020]{14G40, 14C17, 32G20 (Primary); 14C30, 14N20, 14C25 (Secondary)}

\begin{document}

\begin{abstract}
  \input{sections/abstract}
\end{abstract}

\maketitle

\input{sections/00_introduction}
\input{sections/01_correspondence}
\input{sections/02_intermezzo}
\input{sections/03_intersection}
\input{sections/04_limit}

\printbibliography

\end{document}

%% file: sections/abstract.tex
We generalise the standard cross-ratio of four points on a projective line to a cross-ratio of a configuration of four planes in projective $n$-space, the first pair $A_1,\,A_2$ being $k$-dimensional and the second pair $B_1,\, B_2$ being $(n-k-1)$-dimensional, with $A_i \cap B_j = \emptyset$. Over the complex numbers, we show that this cross-ratio equals the augmented height pairing of the corresponding cycles $A_1-A_2, \, B_1-B_2$. Over a discretely valued field, we show that the valuation of the cross-ratio equals the intersection degree of the cycles once they are spread out over the valuation ring. Putting the two together, we conclude that the asymptotics of the Archimedean height pairing of a holomorphic family of configurations are governed by this intersection degree. We also define a degenerate cross-ratio for when $A_i \cap B_j \neq \emptyset$ and interpret the ``limit height'' of a degenerating holomorphic family of planes as the degenerate cross-ratio of the central plane configuration.

%% file: sections/00_introduction.tex

\section{Introduction}

Let $K$ be a field. The ``lowly'' cross-ratio \cite[p.651]{Vakil2003} is a function of four distinct points $p_1,\dots,p_4 \in \p(K)$ on the projective line and outputs their only projective invariant. 
If, by change of coordinates, the first three points are brought into the position $p_1=[1:0]$, $p_2=[0:1]$, and $p_3=[1:1]$, then the fourth point will be $p_4 = [\lambda:1]$, and we can define the cross-ratio to be the coordinate $\lambda \in K$. 

The value of $\lambda$ can also be expressed as follows: 
If $p_i$ represents the line $K \cdot v_i \subset K^2$ and $\ell_i$ is a linear form vanishing on $p_i$ then the cross-ratio evaluates to
\begin{equation}\label{eq:classical}
 \frac{\ell_3(v_1)}{\ell_3(v_2)}\frac{\ell_4(v_2)}{\ell_4(v_1)} \in K^\times.
\end{equation}
This formula can be generalised in an obvious manner, as we now explain.

Take an $(n+1)$-dimensional $K$-vector space $V$. A linear subspace $A \subset \Pp(V) \simeq \pn$ of dimension $k$ is the projectivisation of a $(k+1)$-dimensional subspace $\hat A \subset V$. Note that the annihilator $\hat A^\perp$ of $\hat A$ in the dual space $V^\vee$ is of dimension $(n-k)$, which then defines a linear subspace $A^\perp \subset \Pp(V^\vee)$ of dimension $(n-k-1)$. 

For a finite-dimensional vector space $W$ we write $\det W \coleq \bigwedge^{\dim W} W$ for its top exterior power. Let $A, \, B \subset \pv$ be linear subspaces of dimensions $k$ and $(n-k-1)$ and choose generators $a \in \det \hat A$ and $b^\perp \in \det \hat B^\perp$. The duality between $\bigwedge^{k+1} V$ and $\bigwedge^{k+1} V^\vee$ allows us to pair the two generators $b^\perp(a) \in K$. Note that the elements $a$ and $b^\perp$ depend on the choice of generators, but only up to a scalar.

Concretely, choosing bases $\hat A = K\cdot\langle v_0,\dots,v_k \rangle$ and $\hat B^\perp = K\cdot\langle \ell^0,\dots,\ell^k \rangle$ we have generators $a = v_0 \wedge \dots \wedge v_k$ and $b^\perp = \ell^0 \wedge \dots \wedge \ell^k$ and $b^\perp(a) = \det (\ell^i(v_j)) \in K$, which will be non-zero precisely when $A$ and $B$ are disjoint. 

A \emph{complementary configuration of type $(k,n)$} is a quadruple $A_1,\, A_2,\, B_1, \, B_2 \subset \pv \simeq \pn$ where the $A_i$ are linear of dimension $k$ and the $B_j$ are linear of dimension $(n-k-1)$ and each $A_i$ is disjoint from each $B_j$. 

\begin{definition}\label{definintiongencr}
  The cross-ratio of the complementary configuration $A_1,\, A_2,\, B_1, \, B_2$  is the number 
  \begin{equation}
    (A_1,A_2;B_1,B_2) \coleq \frac{b_1^\perp(a_1)}{b_1^\perp(a_2)}\frac{b_2^\perp(a_2)}{b_2^\perp(a_1)} \in K^\times.
  \end{equation}
\end{definition}

This expression is evidently independent of the choice of generators. Moreover, by construction, it is independent of the choice of coordinates on $\pv$. If $n=1,\ k=0$ then the definition above agrees with the classical formula~\eqref{eq:classical}. We give equivalent formulations and state all symmetries of the cross-ratio in Section~\ref{sec:intermezzo}, including the action of the full symmetry group $S_4$ when $n=2k+1$.

\medskip

Now assume that $K$ is a subfield of the complex numbers. Then, it is well-known that the cross-ratio equals the augmented height pairing of the cycles $p_1-p_2$ and $p_3-p_4$, namely the extension class $\H^1(\p -\{p_3,p_4\},\{p_1,p_2\})(1) \in \Ext^1_{\ZMHS}(\Z(0),\Z(1)) \simeq \C^\times$.
Our first result generalises this observation to higher dimensions.

Take a complementary configuration of type $(k,n)$, $A_i,\, B_j\subset \pv$.
The cycles $A_1-A_2$ and $B_1-B_2$ are homologous to zero in $\pv$ and their \emph{augmented height pairing} is defined to be the value of the extension class 
\begin{equation}\label{eq:hp}
    \ch_\Pp \coleq \H^{2k+1}(\pn  -  B_1\cup B_2 , A_1\cup A_2)(k+1) \in \Ext^1_{\ZMHS}(\Z(0),\Z(1)) \simeq \C^\times.
\end{equation}

The identification $\Ext^1_{\ZMHS}(\Z(0),\Z(1)) \simeq \C^\times$ is canonical~\cite{Carlson1980}; however, viewing $\ch_\Pp$ as an element of $\Ext^1_{\ZMHS}(\Z(0),\Z(1))$ requires making a choice regarding the integral generators of the two non-trivial weight graded pieces of $\ch_\Pp$, an \emph{orientation} as in~\cite[Definition~2.3]{BurgosGilGoswamiPearlstein2022}; this choice is implied by the cycles $A_1-A_2$ and $B_1-B_2$. 

\begin{introtheorem}
\label{introthm:aug_height_is_cross}
    The augmented height pairing of $A_1-A_2$ and $B_1-B_2$ equals the cross-ratio $(A_1,A_2;B_1,B_2)$.
\end{introtheorem}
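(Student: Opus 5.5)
We plan to compute the mixed Hodge structure $\ch_\Pp$ directly and reduce it to a period computation on a single $(2k+1)$-chain. Let $U \coleq \pn - B_1\cup B_2$ and $Z \coleq A_1\cup A_2 \subset U$. The first step is to determine the weight graded pieces of $\H^{2k+1}(U,Z)$. The long exact sequence of the pair gives
\[
\H^{2k}(U)\to \H^{2k}(Z)\to \H^{2k+1}(U,Z)\to \H^{2k+1}(U).
\]
Since $Z$ is the disjoint union of two copies of $\Pp^k$, $\H^{2k}(Z)\simeq \Z(-k)^2$. The image of $\H^{2k}(U)$ is spanned by the restriction of the hyperplane power class, which is the diagonal. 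Dually, we use Gysin/localisation for $B_1\sqcup B_2$, which has codimension $k+1$. This gives $\H^{2k+1}(U)\simeq \ker\big(\H^0(B_1)\oplus\H^0(B_2)\to \H^{2k+2}(\pn)\big)(-k-1)\simeq \Z(-k-1)$, and $\H^{2k+1}(\pn)=0$. After the twist $(k+1)$ we obtain an extension $0\to\Z(1)\to\ch_\Pp\to\Z(0)\to 0$. The generators of the two pieces are fixed by the cycles $A_1-A_2$ and $B_1-B_2$, which matches the orientation convention.

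Next we compute the extension class in Carlson's description. Choose a topological lift of the $\Z(0)$ generator: a relative cycle $\Gamma\in \H_{2k+1}(U,Z)$ with $\partial\Gamma=A_1-A_2$. We take for $\Gamma$ the "cone" or join sweeping $A_1$ to $A_2$. Concretely, choose bases $v_0,\dots,v_k$ of $\hat A_1$ and $w_0,\dots,w_k$ of $\hat A_2$, and let $\Gamma$ be the family of planes spanned by $(1-t)v_i+tw_i$, $t\in[0,1]$, perturbed along a path avoiding $B_1\cup B_2$. Also choose a holomorphic lift of the $\Z(1)$ generator, namely a logarithmic $(2k+1)$-form $\omega$ on $U$ representing the class Poincaré–residue dual to $B_1-B_2$ and vanishing on $Z$ in $F^{k+1}$. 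Then $\ch_\Pp$ corresponds to $\exp\!\big(\int_\Gamma \omega\big)$, with normalisation by $2\pi i$ as in \cite{Carlson1980}.

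To actually evaluate, we reduce to $n=1$. The key observation is that the ambiguity in $\Gamma$ is exactly $\H_{2k+1}(U)\simeq\Z$, matching $\exp$. We then deform the configuration. Using $\mathrm{PGL}(V)$-invariance of both sides, we normalise coordinates so that $\hat A_1=\langle e_0,\dots,e_k\rangle$ and $\hat B_1=\langle e_{k+1},\dots,e_n\rangle$, which is possible since $A_1\cap B_1=\emptyset$. In these coordinates $A_2$ and $B_2$ are graphs of linear maps $\alpha\colon \hat A_1\to \hat B_1$ and $\beta\colon\hat B_1\to\hat A_1$. The cross-ratio becomes $\det(1-\beta\alpha)^{\pm1}$, and the height pairing is a holomorphic function of $(\alpha,\beta)$ on the connected locus where $\det(1-\beta\alpha)\neq0$.

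Both sides are holomorphic, multiplicative-type invariants. We plan to show that the height pairing is multiplicative under degenerations that split the configuration as a direct sum $V=\bigoplus V_i$ of type $(0,1)$ configurations, by Künneth and the join structure. This reduces to the classical case on $\p$ when $\alpha,\beta$ are simultaneously diagonal. The general case then follows by density and analytic continuation: diagonalisable pairs $\beta\alpha$ are dense, and both functions are continuous. The matching of $\log$-branches is controlled by the fact that both take values in $\C^\times$.

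The main obstacle I expect is this multiplicativity/Künneth step. It requires showing that the extension for a join of configurations is the product of the extensions, including correct signs and orientations. If this proves too delicate, the fallback is to compute $\int_\Gamma\omega$ directly. For that we would use the explicit form $\omega=\dd\log\big(b_1^\perp/b_2^\perp\big)\wedge(\cdots)$ pulled back to the Grassmannian-type family parametrising $\Gamma$, and reduce by fibre integration to a one-variable integral $\int \dd\log$ of the determinant ratio along $t$. That integral yields $\log$ of the cross-ratio.
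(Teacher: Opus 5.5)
\textbf{Verdict: there is a genuine gap.} Several parts of your plan are fine: the computation of the graded pieces of $\ch_\Pp$, the description of the class as $\exp\int_\Gamma\omega$, the normal form with cross-ratio $\det(1-\beta\alpha)$, and the density argument on the connected locus $\det(1-\beta\alpha)\neq0$. The missing piece is the step that carries all the content: multiplicativity of the extension class for a join of $\p$-configurations. You do not establish it, and the mechanism you name does not give it.

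The complement $\Pp(V'\oplus V'')-B_1\cup B_2$ is not a product of the complements in $\Pp(V')$ and $\Pp(V'')$. Even for a genuine product, Künneth yields the tensor product $\ch'\otimes\ch''$. That is a three-step mixed Hodge structure with graded pieces $\Z(0),\Z(1)^2,\Z(2)$. Multiplicativity in $\C^\times$ instead means that $\ch_\Pp$ is the Baer sum of $\ch'$ and $\ch''$, and you would have to realise that Baer sum geometrically inside the cohomology of the join. Doing this rigorously essentially requires Hodge-theoretic versions of the reductions of Section~\ref{sec:intermezzo}:
\begin{enumerate}
\item telescope $A_1-A_2$ through intermediate planes that differ in one block, and use bilinearity;
\item prove that the extension is unchanged under projection from $A_1\cap A_2$;
\item prove that it is unchanged under restriction to $\span(A_1\cup A_2)$.
\end{enumerate}
That amounts to an inductive proof in the spirit of Goncharov, and none of it is supplied.

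Your fallback is closer to the paper, but it rests on a false premise. For $k\ge1$ the $B_j$ have codimension $k+1\ge2$. By Hartogs, every holomorphic (even meromorphic) $(2k+1)$-form on $U$ extends to $\Pp^n$ and hence vanishes. So there is no logarithmic form $\omega$ on $U$ representing the $F^{k+1}$-lift. On a log resolution $(\tilde X,E)$ of $(\Pp^n,B_1\cup B_2)$ that lift lives in $\H^k(\tilde X,\Omega^{k+1}_{\tilde X}(\log E))$, so any closed representative is non-holomorphic. Your reduction ``by fibre integration to a one-variable $\int\dd\log$'' therefore remains a heuristic.

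The missing idea is to transport the whole extension rather than a form:
\begin{itemize}
\item In the incidence correspondence $\ci\subset\G(k,\pv)\times\pv$, the map $p_!q^*$ is a morphism of mixed Hodge structures $\ch_\Pp\to\H^1(\G(k,\pv)-B_1^\between\cup B_2^\between,\{[A_1],[A_2]\})(1)$.
\item Using the coboundary from $\H^{2k}(A_1\cup A_2)$ and the residue maps, one checks that it sends the graded generators to each other. Hence it is an isomorphism of extensions.
\item On the Grassmannian, $B_j^\between$ is the hyperplane section cut out by $b_j^\perp$ in the Plücker embedding. So $\dd\log(b_1^\perp/b_2^\perp)$ is an honest logarithmic $1$-form lifting $\Z(0)$ to $F^0$.
\item Its integral along a path $\gamma$ from $[A_1]$ to $[A_2]$ gives the cross-ratio immediately.
\end{itemize}
Your $\Gamma$ is exactly $q_*\big(p^{-1}(\gamma)\big)$ for such a path. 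So you had the right chain, but not the comparison of extensions that makes the computation legitimate.
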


Our remaining results, Theorems~\ref{introthm:dvr} and~\ref{introthm:reg}, pursue the identity of Theorem~\ref{introthm:aug_height_is_cross} in a variational setting: the leading asymptotics of the height pairing are governed by a valuation of the cross-ratio, and its limiting value is a \emph{degenerate} cross-ratio, which lends the limit a ``motivic'' interpretation.

\begin{remark}
Theorem~\ref{introthm:aug_height_is_cross} agrees with the main result of Goncharov's recent paper~\cite{Goncharov2026}, but with a different proof; we use a correspondence, whereas he uses induction. Both papers grew out of a conversation with Spencer Bloch at the Regulators V conference in Pisa, 2024. However, we were unaware of Goncharov's work until it appeared on arXiv, coincidentally the same day Bakker defended his thesis~\cite{Bakker2026} on which this article is based.
\end{remark}

  \medskip

 By Hain \cite[Prop.~3.3.7]{Hain1990}, the Archimedean height pairing of $A_1-A_2$ and $B_1-B_2$ equals the obstruction to splitting the underlying real MHS, that is, the image of the extension class of $\ch_\Pp$ under
\[
\Ext^1_{\ZMHS}(\Z(0),\Z(1))\simeq\C^\times \xrightarrow{\ \log|\cdot|\ } \R \simeq \Ext^1_{\RMHS}(\R(0),\R(1)).
\]
Together with Theorem~\ref{introthm:aug_height_is_cross}, this gives
\begin{equation}\label{eq:arch_height}
\langle A_1-A_2,B_1-B_2\rangle_\infty = \log|(A_1,A_2;B_1,B_2)|.
\end{equation}

Suppose $K=\Q$. Because $A_1-A_2$ is linearly equivalent to $0$, the sum of the local height pairings over all places must be $0$. Taken together with the formula~\eqref{eq:arch_height} for the Archimedean height, this implies that the local height at a prime $p$ is
\begin{equation}\label{eq:non_arch_height}
    \langle A_1-A_2,B_1-B_2\rangle_p = -v_p(A_1,A_2;B_1,B_2)\log p,
\end{equation}
as the logarithms of primes are linearly independent over $\Q$. 

Since the non-Archimedean height pairings can be computed by an intersection product over a local model, we conclude that the degeneration of a configuration modulo $p$ is measured by the $p$-adic valuation of the cross-ratio. The analogue of the formula~\eqref{eq:non_arch_height} holds over any number field; we deduce it from the more general fact below.

\medskip

Now let $R$ be a discrete valuation ring (DVR) with fraction field $K$, maximal ideal $\xm$, and valuation $v_R$. Let $\underline A_i,\underline B_j\subset\pn_R$ be the Zariski closures of the cycles $A_i,B_j \subset \pn_K$. Following Fulton~\cite[Ch.~20]{Fulton1998}, these cycles have a well-defined intersection product whose degree we denote by $\deg\big((\underline A_1-\underline A_2)\cdot(\underline B_1-\underline B_2)\big)\in\Z$.

Note that these cycles are disjoint over the generic fibre, and may only intersect over the special fibre. The expected dimension of the intersection locus is $0$ and the intersection product is a $0$-cycle supported over $\xm$; however, the true intersection may have excess dimension.

\begin{introtheorem}\label{introthm:dvr}
    The intersection degree equals the valuation of the cross-ratio: 
   \[ \deg\big((\underline A_1-\underline A_2)\cdot(\underline B_1-\underline B_2)\big) = v_R(A_1,A_2;B_1,B_2).\]
\end{introtheorem}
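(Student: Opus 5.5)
The plan is to reduce the intersection degree to a sum of four local pairings $\deg(\underline A_i\cdot\underline B_j)$, and to compute each of these as the valuation of the determinant $b_j^\perp(a_i)$ once the generators are chosen integrally. By bilinearity of Fulton's refined intersection product,
\[
\deg\big((\underline A_1-\underline A_2)\cdot(\underline B_1-\underline B_2)\big)=\sum_{i,j}(-1)^{i+j}\deg(\underline A_i\cdot\underline B_j),
\]
where each term is a well-defined integer. Indeed, $\underline A_i\cap\underline B_j$ lies in the special fibre, so the refined product is a $0$-cycle class on a proper $k$-scheme, namely the closed fibre of $\underline A_i\cap \underline B_j$. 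Comparing with Definition~\ref{definintiongencr}, it therefore suffices to prove, for a single pair $A,B$ of complementary dimensions with $A\cap B=\emptyset$, that
\[
\deg(\underline A\cdot\underline B)=v_R\big(b^\perp(a)\big),
\]
where $a$ and $b^\perp$ are \emph{primitive} generators: generators of the rank-one $R$-modules $\det(\hat A\cap R^{n+1})$ and $\det(\hat B^\perp\cap (R^{n+1})^\vee)$. The choice of scalars cancels in the cross-ratio in any case, but with integral choices each individual term becomes meaningful.

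To prove the single-pair identity, I would use that $\hat A_R\coleq\hat A\cap R^{n+1}$ is a saturated submodule of $R^{n+1}$, hence a direct summand, so that $\underline A=\Pp(\hat A_R)\simeq\Pp^k_R$ is a linear subbundle. Likewise $\hat B_R$ is saturated with saturated annihilator $\hat B^\perp_R$, and $\underline B$ is the zero scheme in $\pn_R$ of the $k+1$ linear forms $\ell^0,\dots,\ell^k$ forming a basis of $\hat B^\perp_R$. Hence $\underline B$ is a regular embedding of codimension $k+1$, cut out by a section of $\co(1)^{\oplus(k+1)}$. Its refined intersection with $\underline A$ is then the localized top Chern class: the intersection of $\underline A\simeq\Pp^k_R$ with the zero locus of the restricted section $s=(\ell^i|_{\underline A})$ of $\co(1)^{\oplus (k+1)}$ on $\Pp^k_R$.

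In coordinates $v_0,\dots,v_k$ for $\hat A_R$, this section is given by the $(k+1)\times(k+1)$ matrix $M=(\ell^i(v_j))$ over $R$, with $\det M=b^\perp(a)$. By Smith normal form, after changing bases in $\GL_{k+1}(R)$ on both sides (which changes neither the cycles nor the valuation), $M=\diag(\pi^{e_0},\dots,\pi^{e_k})$. The zero scheme is then $\{x_i\pi^{e_i}=0\}$, which in general has excess dimension whenever several $e_i>0$; this is exactly the excess intersection the introduction warns about. For that reason I would avoid computing directly and instead use deformation invariance. The localized class $\mathbb{Z}(s)\in A_0(\underline A_\xm\cap\underline B)$ pushes forward to $\Pp^k_k$, and its degree equals the degree of $c_{k+1}(\co(1)^{k+1})\cap[\Pp^k_R]$ restricted via the specialization/excess formula. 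Concretely, I expect to write $s=\prod$-type decomposition: since $M$ is diagonal, $s$ is a direct sum of sections $\pi^{e_i}x_i$ of $\co(1)$, and the localized top Chern class is the product of the localized first Chern classes. Each divisor $\{\pi^{e_i}x_i=0\}=e_i[\text{special fibre}]+[x_i=0]$, and intersecting these successively gives $\sum_i e_i\cdot\deg(\text{point})=\sum e_i=v_R(\det M)$. The step I expect to be the main obstacle is making this rigorous in Fulton's bivariant framework. It requires the product formula for localized Chern classes of a direct sum, together with the fact that the only contributions come from one factor $\pi^{e_i}$ cutting the special fibre while the remaining $x_{i'}$ cut out a point in $\Pp^k_k$. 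Here I would lean on \cite[Ch.~20]{Fulton1998}, specifically the compatibility of refined intersection with specialization.

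Finally, I would sum the four contributions with their signs. The terms $v_R(b_j^\perp(a_i))$ combine to $v_R$ of the cross-ratio, which proves the theorem.
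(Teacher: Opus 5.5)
Your proof is correct, and its skeleton is the paper's. Bilinearity reduces to one pair, $\underline B$ is the regular zero locus of a section of $\co(1)^{\oplus(k+1)}$, the refined product is the localized top Chern class $\Zloc(s)$ of its restriction to $\underline A\simeq\Pp^k_R$, and Smith normal form diagonalises $s$. Working with $b^\perp(a)=\det M$ for a primitive $b^\perp$, instead of $a\wedge b$ and Lemma~\ref{lem:wedge-det}, is only repackaging: the primitive annihilator lattice is $(\cv/\hat\cb)^\vee$, so your $M$ is the matrix of the paper's map $\mu$.

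The genuine difference is the last step. The paper peels off the common power $\pi^{e_0}$ with the residual formula, each division contributing $c_k(E)\cap[F]=(k+1)[\mathrm{pt}]$. It then splits off the resulting unit line-bundle summand, repeats, and telescopes $\sum_j(k+1-j)(e_j-e_{j-1})=\sum_j e_j$. You instead split $E$ completely, so that $\Zloc(s)=D_0\cdots D_k\cdot[\Pp^k_R]$ with $D_i=\mathrm{div}(\pi^{e_i}x_i)=e_i[F]+[H_i]$. The expansion behaves exactly as you predict:
\begin{itemize}
\item Consider the term where $D_j$ is the first factor to contribute $e_j[F]$. Every later $D_i$ acts on classes supported on $F$ simply by $h=c_1(\co(1))$; equivalently, the $e_i[F]$ part of $D_i$ acts by $c_1(\co(F)|_F)=0$. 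So this term gives $e_j\deg\big(h^{k-j}\cap[F\cap H_0\cap\dots\cap H_{j-1}]\big)=e_j$.
\item The purely horizontal term vanishes because $H_0\cap\dots\cap H_k=\emptyset$.
\end{itemize}
This avoids the residual formula and shows directly where each elementary divisor enters. The multiplicativity you flag as the main obstacle is routine. It comes from factoring the refined Gysin map of the zero section of $\bigoplus_i\co(1)$ through those of the summands, i.e.\ from successive intersection with pseudo-divisors.

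Do discard the middle ``deformation invariance''/specialization paragraph, though, because it cannot produce the answer. The unlocalized class $c_{k+1}(\co(1)^{\oplus(k+1)})\cap[\Pp^k_R]$ is zero: $h^k\cap[\Pp^k_R]$ is the class of an $R$-section, on which $\co(1)$ is trivial. Yet $\sum_i e_i$ depends on $M$. What you need from Fulton is the Chapter~14 calculus of localized classes over a regular base, not compatibility with specialization. Your ``concretely'' paragraph, which computes that localized class directly, is the actual proof.
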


Consider the following application. Let $\Delta \subset \C$ be a complex disk centred at $0$ and $\Delta^* = \Delta - \set{0}$. Suppose $\underline A_i,\underline B_j \subset \pn_\Delta$ form a flat holomorphic family whose fibres $\underline A_i(t),\underline B_j(t)$ constitute a complementary configuration of type $(k,n)$ for every $t\in\Delta^*$. By the Archimedean formula \eqref{eq:arch_height}, applied to the fibre over $t$, we have
\[
\langle\underline A_1(t)-\underline A_2(t),\underline B_1(t)-\underline B_2(t) \rangle_\infty = \log|(\underline A_1(t),\underline A_2(t);\underline B_1(t),\underline B_2(t))|.
\]
Note that the expression on the right is the logarithm of a meromorphic function of $t \in \Delta$, and therefore admits the asymptotic expansion 
\begin{equation}\label{eq:arch_divergence}
  \langle\underline A_1(t)-\underline A_2(t),\underline B_1(t)-\underline B_2(t) \rangle_\infty = m \log|t| + \log |u(t)|, \quad |t| \ll 1
\end{equation}
where $m$ is an integer and $u(t)$ is holomorphic and non-zero at $t=0$.

\begin{corBprime}\label{introcor:asymptotics}
    The coefficient $m$ of $\log|t|$ in the asymptotic expansion of the height pairing is the intersection degree of the cycles over $t=0$,
    \[
    m = \deg\big((\underline A_1-\underline A_2)\cdot(\underline B_1-\underline B_2)\big).
    \]
\end{corBprime}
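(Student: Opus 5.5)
The plan is to deduce the corollary from Theorem~\ref{introthm:aug_height_is_cross} (in the form \eqref{eq:arch_height}) and Theorem~\ref{introthm:dvr}, applied to the local ring of the disk at the origin. Let $R=\co_{\Delta,0}$ be the ring of germs of holomorphic functions at $0$. It is a DVR with uniformiser $t$, and $v_R$ is the order of vanishing at $0$. Let $K$ be its fraction field of germs of meromorphic functions. Since only germs near $t=0$ matter, I shrink $\Delta$ as needed.

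First I would show the family is defined over $R$. A flat family of linear subspaces of fixed dimension over $\Delta$ is the same as a holomorphic map $\Delta\to\Gr$. Near $0$ one can choose holomorphic frames: vectors $v_0(t),\dots,v_k(t)$ spanning $\hat{\underline A}_i(t)$ for every $t$ near $0$, including $t=0$, and likewise linear forms $\ell^0(t),\dots,\ell^k(t)$ spanning $\hat{\underline B}_j(t)^\perp$. These entries lie in $R$, so they define subspaces $A_i,B_j\subset\Pp^n_K$. Flatness means the analytic families are the closures of their generic fibres. Since the frames have independent reductions at $t=0$, the closures of $A_i,B_j$ in $\Pp^n_R$ are exactly the subschemes cut out by these frames, and their analytification near $0$ is the given family.

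Second, I compute the cross-ratio with these frames. The generators $a_i(t)=v_0(t)\wedge\dots\wedge v_k(t)$ and $b_j^\perp(t)$ give $b_j^\perp(a_i)(t)=\det(\ell^r(t)(v_s(t)))$, which is holomorphic in $t$. For $t\neq0$ these values compute the cross-ratio of the fibre, because the cross-ratio does not depend on the choice of generators. So the meromorphic function $t\mapsto(\underline A_1(t),\underline A_2(t);\underline B_1(t),\underline B_2(t))$ is the germ of the element $(A_1,A_2;B_1,B_2)\in K^\times$. Write it as $t^{m'}u(t)$ with $u(0)\neq0$, where $m'=v_R(A_1,A_2;B_1,B_2)$. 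Then \eqref{eq:arch_height} gives the expansion \eqref{eq:arch_divergence} with $m=m'$. This $m$ is well defined: $\log|u(t)|$ is bounded near $0$, so the coefficient of $\log|t|$ is uniquely determined.

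Third, Theorem~\ref{introthm:dvr} gives $m'=\deg\big((\underline A_1-\underline A_2)\cdot(\underline B_1-\underline B_2)\big)$, computed over $\Spec R$. The main obstacle is to identify this algebraic intersection degree over the local ring with the intersection degree of the analytic family over $t=0$ that the corollary refers to. I would handle this by comparison. The intersection product is supported on the special fibre $\Pp^n_{\C}$, which is proper. Fulton's refined intersection, computed for example by deformation to the normal cone or via a local complete intersection structure, depends only on a neighbourhood of the special fibre. Moreover, all the data are algebraic over $R$, since each cycle is given by a linear frame over $R$. So the degree agrees with the one computed over the algebraic local ring $\C[t]_{(t)}$, or equivalently over the henselisation, where the analytic and algebraic pictures coincide. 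Alternatively, one can simply take this as the definition of the intersection degree of the holomorphic family over $t=0$, in which case the corollary follows directly.
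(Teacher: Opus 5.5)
Your proposal is correct and follows the paper's own argument: pass to the DVR $R=\mathcal{O}_{\Delta,0}$, use \eqref{eq:arch_height} to identify $m$ with the order of vanishing $v_R$ of the cross-ratio (viewed as a meromorphic germ), and then apply Theorem~\ref{introthm:dvr}. The paper simply takes the intersection degree to be the one computed over $\mathcal{O}_{\Delta,0}$, which is your ``alternatively'' reading, so your comparison paragraph is not needed. As written, that paragraph's claim that the data descend to $\C[t]_{(t)}$ or its henselisation is also unjustified, because holomorphic frames need not have algebraic entries; a correct fix would note that $v_R(\det\mu)$ depends only on a finite jet of the frames.
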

This corollary follows immediately from Theorem~\ref{introthm:dvr} by passing to $R=\co_{\Delta,0}$, the ring of germs of holomorphic functions at $0$, which is a DVR.  The coefficient $m$ is the order of vanishing of the cross-ratio at $t=0$, i.e., the $t$-adic valuation of the cross-ratio, which equals the intersection degree by the theorem.

More generally, Chen studied the rate of divergence of the Archimedean height pairing of properly degenerating algebraically trivial cycles and he predicts that divergence is governed by their intersection degree
~\cite[Conjecture~1.5]{Chen2025}. This has been proven in the case of degenerating curves by Holmes and De Jong~\cite{HolmesDeJong2015}. Corollary~\ref{introcor:asymptotics} proves this conjecture in the case of degenerating planes of any dimension. Moreover, since Theorem~\ref{introthm:dvr} requires no properness assumption, Corollary~\ref{introcor:asymptotics} suggests that the conjecture may hold for arbitrary degenerations. 

\medskip

We turn to giving a ``motivic'' interpretation of the limit period $u(0)$ by describing it in terms of a further generalisation of the cross-ratio. On the special fibre, the four planes may meet, and we need to introduce a cross-ratio of such intersecting tuples.

It will be convenient here to use an equivalent formulation (Section~\ref{sec:intermezzo}) of the cross-ratio for a complementary configuration
\begin{equation}\label{eq:cr-wedge}
  (A_1,A_2;B_1,B_2) = \frac{a_1 \wedge b_1}{a_1 \wedge b_2}\, \frac{a_2 \wedge b_2}{a_2 \wedge b_1}.
\end{equation}

A \emph{degenerate complementary configuration of type $(k,n)$} is a tuple $A_1,\, A_2,\, B_1,\, B_2 \subset \pv$ with $\dim A_i = k$, $\dim B_j = n-k-1$; no assumption is made regarding the intersections $A_i \cap B_j$. Hence $a_i \wedge b_j$ may be zero. For emphasis, we refer to the case when all four intersections $A_i \cap B_j$ are empty as \emph{non-degenerate}.

Let $\hat S_{ij} = \hat A_i \cap \hat B_j$, and $\hat Q_{ij} = V/(\hat A_i + \hat B_j)$; note that $\dim \hat S_{ij} = \dim \hat Q_{ij}$. When $\hat S_{ij} \neq 0$ we have a \emph{canonical} isomorphism, Section~\ref{sec:limit},
\begin{equation}\label{eq:detV}
  \det V \simeq \det \hat A_i \otimes \det \hat B_j \otimes \hom(\det \hat S_{ij}, \det \hat Q_{ij}).
\end{equation}

Let $\xc = (\xc_{11}, \xc_{12}, \xc_{21}, \xc_{22})$ with each $\xc_{ij} \in \hom(\det \hat S_{ij}, \det \hat Q_{ij})$ a non-zero element. When $\hat S_{ij} = 0$, and hence $\hat Q_{ij} = 0$, both determinants are the empty exterior power $\det 0 = K$ by convention and so $\xc_{ij} \in K^\times$.

\begin{definition}
  The \emph{degenerate cross-ratio} of a degenerate complementary configuration is
  \begin{equation}\label{eq:xc-cr}
    (A_1,A_2;B_1,B_2)_\xc \coleq
      \frac{a_1 \otimes b_1 \otimes \xc_{11}}{a_1 \otimes b_2 \otimes \xc_{12}}\,
      \frac{a_2 \otimes b_2 \otimes \xc_{22}}{a_2 \otimes b_1 \otimes \xc_{21}} \ \in K^\times,
  \end{equation}
where the ratios are computed in $\det V$ using the canonical identifications~\eqref{eq:detV} for each~$i,j$. 
\end{definition}

The degenerate cross-ratio is independent of the generators $a_i, b_j$, but it depends on $\xc$; more precisely, it depends only on the alternating tensor $\xc^\otimes \coleq \xc_{11} \otimes \xc_{22} \otimes \xc_{12}^{-1} \otimes \xc_{21}^{-1}$. When every $\hat S_{ij} = 0$, we may pick $\xc = (1,1,1,1)$ and recover~\eqref{eq:cr-wedge}, the cross-ratio of a complementary configuration.

\medskip

The important point is that, in the presence of extra structure, a natural $\xc$ is induced. We will consider two cases: First, when the degenerate complementary configuration is attained as the special fibre of a generically non-degenerate family, there is an induced ``limit perturbation'' $\xci$. Second, when the base field is a number field of class number $1$, we get an ``arithmetic perturbation'' $\xca$.

Let $R$ be a DVR, with $K,\, \xm$, and $v_R$ as above, $\kappa$ the residue field, and $t$ a uniformiser. 
We define the \emph{$t$-regularised limit} as the map
\begin{equation}\label{eq:reg-limit}
  \reg_t \colon K^\times \to \kappa^\times, \qquad  x \mapsto t^{-v_R(x)} x \mod \xm,
\end{equation}
which depends on $t$ only through $t \bmod \xm^2$. Our application of the map $\reg_t$ is in the spirit of the regularisation of period integrals of~\cite{DupontPanzerPym2026}, here in algebraic guise.

Let $\underline A_i, \underline B_j \subset \pn_R$ be the closures of a non-degenerate configuration $A_i, B_j$ over $K$, and denote the special fibres as $A_i^s, B_j^s \subset \pn_\kappa$ which form a degenerate configuration. The family induces a natural datum $\xci$, depending only on $t \bmod \xm^2$, recording the infinitesimal separation of the meeting planes, see Proposition~\ref{prop:limit-perturbation}. Geometrically, the way in which this degenerate configuration comes apart induces an ``infinitesimal perturbation'' $\xci[,ij]$ of the intersection loci $A_i^s \cap B_j^s$ away from $\span(A_i^s \cup B_j^s)$.

\begin{introtheorem}\label{introthm:reg}
  With the notation above,
  \[
    \reg_t(A_1,A_2;B_1,B_2) = (A_1^s, A_2^s; B_1^s, B_2^s)_{\xci}.
  \]
\end{introtheorem}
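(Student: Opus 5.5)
The plan is to reduce Theorem~\ref{introthm:reg} to a statement about a single factor. Using the wedge formulation~\eqref{eq:cr-wedge}, the cross-ratio over $K$ is an alternating product of the four quantities $a_i\wedge b_j\in\det V_K$. We choose the generators $a_i,b_j$ to be \emph{integral and primitive}: $a_i$ generates $\det\hat{\underline A}_i$, where $\hat{\underline A}_i=\hat A_i\cap R^{n+1}$ is a saturated free $R$-submodule. Then $a_i \bmod \xm$ is a generator of $\det\hat A_i^s$, and likewise for $b_j$. With these choices, $\reg_t$ is multiplicative, so it suffices to show, for each pair $(i,j)$, that
\[
\reg_t\!\Big(\frac{a_i\wedge b_j}{e}\Big) = \frac{\bar a_i\otimes\bar b_j\otimes\xci[,ij]}{\bar e},
\]
where $e$ is a fixed generator of $\det R^{n+1}$. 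Here $\xci[,ij]$ is the limit perturbation of Proposition~\ref{prop:limit-perturbation}, and the right-hand side is computed through the canonical isomorphism~\eqref{eq:detV}. Once this holds, the $\bar e$'s cancel in the alternating product and the theorem follows.

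For a single pair, write $M=\hat{\underline A}_i$ and $N=\hat{\underline B}_j$. These are saturated submodules of $R^{n+1}$ with $M_K\cap N_K=0$, so $M\oplus N\hookrightarrow R^{n+1}$ is injective with torsion cokernel $T$. By elementary divisors (Smith normal form) we can pick a basis of $R^{n+1}$ in which the map $M\oplus N\to R^{n+1}$ is diagonal with entries $t^{e_1},\dots,t^{e_r},1,\dots,1$. Then $a_i\wedge b_j = u\,t^{\sum e_l}\,e$ with $u\in R^\times$, and $\reg_t$ of this quantity is $\bar u$ times $\bar e$. The number $s$ of indices with $e_l>0$ is exactly $\dim\hat S_{ij}=\dim\hat Q_{ij}$, since the reduction $\bar M\oplus\bar N\to\kappa^{n+1}$ has kernel $\hat S_{ij}$ and cokernel $\hat Q_{ij}$. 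We then need to identify $\bar u$ with the element produced by~\eqref{eq:detV} from $\xci[,ij]$.

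The identification uses Proposition~\ref{prop:limit-perturbation}, read as follows. The limit perturbation sends a vector $\bar s\in\hat S_{ij}$, lifted to $s=m-n$ with $m\in M$, $n\in N$ and $m-n\in tR^{n+1}$, to the class of $t^{-1}(m-n)$ in $\hat Q_{ij}$. When some $e_l\ge 2$ this first-order construction must be refined, presumably by taking the leading-order term in each elementary-divisor block. The map induced on determinants then multiplies by the corresponding factor $t^{-\sum e_l}$. Comparing with the canonical isomorphism~\eqref{eq:detV}, which comes from the exact sequence $0\to\hat S\to\hat A\oplus\hat B\to V\to\hat Q\to 0$, shows that both sides are the same unit $\bar u$.

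The main obstacle is this last step. The sign and ordering conventions in~\eqref{eq:detV} must match those in the definition of $\xci$. More seriously, the case of higher elementary divisors ($e_l\ge2$), where the naive first-order perturbation degenerates, has to be handled correctly. I would first check the case where all $e_l\le1$, then treat the general case by filtering $T$ by $t$-adic order and using multiplicativity of determinants along the filtration. By Proposition~\ref{prop:limit-perturbation}, the result depends only on $t\bmod\xm^2$, which gives a consistency check.
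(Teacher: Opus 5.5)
Your reduction (integral primitive generators $a_i,b_j$, multiplicativity of $\reg_t$, and reduction to the single-pair identity $\reg_t(a_i\wedge b_j)=\bar a_i\otimes\bar b_j\otimes\xci[,ij]$ in $\det V^s$) is exactly the paper's. The gap is in the step you flag as the main obstacle, and it comes from working with a different object than the paper's $\xci$. In Proposition~\ref{prop:limit-perturbation} the limit perturbation is not a vector-level map $\hat S_{ij}\to\hat Q_{ij}$, $\bar s\mapsto[t^{-1}(m-n)]$. It is defined directly on determinant lines. The map $\reg_t\det\Psi = t^{-v_R(\det\Psi)}\det\Psi \bmod \xm$ is an isomorphism $\det W_1^s\otimes\det W_2^s\to\det V^s$. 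By definition, $\xci$ is its image under the canonical isomorphism $\hom(\det W_1^s\otimes\det W_2^s,\det V^s)\simeq\hom(\det S,\det Q)$ induced by \eqref{eq:detV_in_proof}.

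With this definition the single-pair identity is immediate. By construction $\bar a_i\otimes\bar b_j\otimes\xci[,ij] = (\reg_t\det\Psi)(\bar a_i\otimes\bar b_j)$. Since $\det\Psi(a_i\otimes b_j)=a_i\wedge b_j$ and $v_R(\det\Psi)=v_R(a_i\wedge b_j)$ (Lemma~\ref{lem:wedge-det}), this equals $\reg_t(a_i\wedge b_j)$. You then need no Smith normal form, no identification of a unit $\bar u$, and no case analysis on elementary divisors. The sign and ordering conventions agree automatically, because both sides are read through the same identification \eqref{eq:detV_in_proof}.

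With your reading, the argument does not close. Your map $\bar s\mapsto[t^{-1}(m-n)]$ is the snake-lemma connecting map for $\Psi$ reduced modulo $t$. In Smith bases it sends the reduction of the $l$-th basis vector of $M\oplus N$ (with $e_l\ge 1$) to the class in $\hat Q_{ij}$ of $t^{e_l-1}$ times the $l$-th basis vector of $R^{n+1}$, reduced mod $\xm$. So its determinant recovers $\xci[,ij]$ (up to sign conventions) only when all $e_l\le 1$, i.e.\ when $\operatorname{coker}\Psi$ is killed by $t$. As soon as some $e_l\ge 2$ the map is not injective and its determinant is zero, so your right-hand side would not even lie in $\kappa^\times$. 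Your proposed repair, taking ``the leading-order term in each elementary-divisor block'', depends on the choice of Smith bases and is never made canonical, and the filtration argument is only sketched. As written, the key identification is therefore missing. Replacing your first-order map by the determinant-level definition above supplies it, which is why the paper says the heavy lifting is done by the definitions.
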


The heavy lifting in this theorem is done by the definitions, but the result is suggestive:

\begin{corCprime}\label{introcor:limit}
  In the context of Corollary~\ref{introcor:asymptotics}, with $R=\co_{\Delta,0}$, we have
  \[
    u(0) = (A_1^s, A_2^s; B_1^s, B_2^s)_{\xci}.
  \]
\end{corCprime}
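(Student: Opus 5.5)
We deduce Corollary~\ref{introcor:limit} from Theorem~\ref{introthm:reg}, applied to the DVR $R=\co_{\Delta,0}$ with uniformiser $t$ (the coordinate on $\Delta$) and residue field $\kappa=\C$.

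First we make the algebraic cross-ratio match the analytic function in \eqref{eq:arch_divergence}. The flat holomorphic family $\underline A_i,\underline B_j\subset\pn_\Delta$ restricts to germs at $0$. These germs are linear subspaces of $\pn_R$ that are flat over $R$, so each is the Zariski closure of its generic fibre over $K=\operatorname{Frac}(R)$. Hence the closures in the statement of Theorem~\ref{introthm:reg} are exactly the given family, and their special fibres are the central planes $A_i^s=\underline A_i(0)$ and $B_j^s=\underline B_j(0)$.

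Locally near $0$ we choose holomorphic frames: vectors $v_0(t),\dots,v_k(t)$ spanning $\hat{\underline A}_i(t)$, and forms spanning $\hat{\underline B}_j(t)^\perp$, all linearly independent at $t=0$. Flatness, i.e.\ being a family of subspaces of constant dimension, gives such frames. With these frames, each pairing $b_j^\perp(a_i)$ is a holomorphic function of $t$. Therefore the element $x=(A_1,A_2;B_1,B_2)\in K^\times$ is the germ of the meromorphic function $\lambda(t)=(\underline A_1(t),\underline A_2(t);\underline B_1(t),\underline B_2(t))$.

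Next we identify $m$ and $u$. Write $\lambda(t)=t^{v_R(x)}w(t)$ with $w$ holomorphic and $w(0)\neq0$. Comparing with \eqref{eq:arch_divergence} gives $m=v_R(x)$, as already noted for Corollary~\ref{introcor:asymptotics}. The function $u$ is only determined up to a holomorphic unit of modulus one near $0$. Such a unit is constant on a disc by the open mapping theorem, so it is a constant of absolute value one. The natural normalisation is $u=w$, which is exactly how $u$ arises from the meromorphic expansion. We will adopt this normalisation, reading $u(t)=t^{-m}\lambda(t)$ as the paper's statement intends.

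Then $\reg_t(x)=t^{-v_R(x)}x \bmod \xm$ is the value $w(0)=u(0)$. Theorem~\ref{introthm:reg} gives $\reg_t(x)=(A_1^s,A_2^s;B_1^s,B_2^s)_{\xci}$, which is the claim.

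The only delicate point is the normalisation of $u$, which is ambiguous up to a unimodular constant if read only through $\log|u|$. I would address it by stating explicitly that $u$ denotes $t^{-m}$ times the cross-ratio function. The rest is bookkeeping: frames exist because of flatness, the closures agree with the holomorphic family, and $\xci$ is computed with respect to the same uniformiser $t$.
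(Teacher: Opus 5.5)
Your proposal is correct and follows the paper's implicit argument: apply Theorem~\ref{introthm:reg} with $R=\co_{\Delta,0}$ and uniformiser $t$, identify the $K$-cross-ratio with the germ of the meromorphic function $\lambda(t)=t^m u(t)$, and note that $\reg_t$ is evaluation of $t^{-m}\lambda$ at $0$. Your remark that \eqref{eq:arch_divergence} pins down $u$ only up to a unimodular constant, so that $u$ must be read as $t^{-m}$ times the cross-ratio function, correctly makes explicit a normalisation the paper leaves implicit.
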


In other words, the limit period $u(0)$ can now be seen as a degenerate cross-ratio on the \emph{central fibre}. Here, $\xci$ is infinitesimal data on the deformation of the ``singularities'' of the limit cycle; it does not need the full family that leads to the limit period.

The corollary above is in the same vein as \cite[Main~Theorem]{BlochDeJongSertoz2023}, which studies limits of periods of smooth curves, and \cite{Beilinson2025}, which studies the limits of periods in odd dimensional nodal degenerations. Our setup, although certainly special, considers limits of mixed periods, in arbitrary dimensions, and arbitrarily bad degenerations of the cycles, making for an interesting testing ground.

However, in \cite[Main~Theorem]{BlochDeJongSertoz2023} there is more: when the central fibre is defined over a number field, an explicit correction term is given that transforms the limit period to a canonical arithmetic expression on the central fibre. We now pursue an analogous result.

\medskip

Suppose that $\kappa$ is a number field of class number $1$. The integral model of the special fibre singles out an arithmetic perturbation $\xca$ well-defined up to integral units $\co_\kappa^\times$, see Definition~\ref{def:arithmetic_perturbation}. Accordingly, we define the \emph{arithmetic cross-ratio}
\[
  (A_1^s, A_2^s; B_1^s, B_2^s)_\kappa \coleq (A_1^s, A_2^s; B_1^s, B_2^s)_{\xca} \in \kappa^\times/\co_\kappa^\times,
\]
where the notation emphasises that the dependence is only on $\kappa$. The norm of the arithmetic cross-ratio is well defined in $\Q^\times/\{\pm 1\}$ and equals the alternating product of lattices induced by spreading out the configuration over $\co_\kappa$, see Lemma~\ref{lem:norm_of_crossratio}. 

When $\kappa$ does not have class number $1$, it is possible to define the cross-ratio as a fractional ideal, see Remark~\ref{rem:tempt}, but we have not investigated this issue further.

\medskip

Let us return to the set-up of Corollary~\ref{introcor:asymptotics}; however, we now assume that the central fibre can be defined over a number field $\kappa$ of class number $1$. To simplify the set-up, suppose the family of configurations is algebraic: i.e., it is obtained from a family defined over a quasi-projective curve over $\kappa$. 

Now pass to $R = \kappa[[t]]$, a DVR. The algebraicity hypothesis guarantees that for each embedding $\sigma\colon \kappa \toi \C$ the defining equations of the family of planes will be convergent near $t=0$. For each such $\sigma$, the complexified family has a limit period $u_\sigma(0)$ as in Corollary~\ref{introcor:asymptotics}.

We now compare the value of $u_\sigma(0)$ to the arithmetic cross-ratio. This requires comparing the alternating tensors $\xci^\otimes$ and $\xca^\otimes$, the latter of which is defined only up to integral units. Therefore, their respective ratio $\xci^\otimes/\xca^\otimes$ defines an element of $\kappa^\times/\co_\kappa^\times$. 

\begin{corCsecond}\label{introcor:norm}
  The limit periods satisfy
  \[
    \log\abs{u_\sigma(0)}
      = \log\abs{(A_1^s,A_2^s;B_1^s,B_2^s)_\kappa}_\sigma + \log\abs{\xci^\otimes/\xca^\otimes}_\sigma \in \R/\log\abs{\co_\kappa^\times}_\sigma,
  \]
  and summing over all embeddings we get
  \[
    \sum_{\sigma\colon\kappa\hookrightarrow\C} \log\abs{u_\sigma(0)}
      = \log N (A_1^s,A_2^s;B_1^s,B_2^s)_\kappa + \log N(\xci^\otimes/\xca^\otimes) \in \R.
  \]
\end{corCsecond}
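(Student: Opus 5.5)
Corollary~\ref{introcor:norm} follows by combining Corollary~\ref{introcor:limit} (applied after each embedding $\sigma$) with the multiplicativity of the degenerate cross-ratio in the perturbation datum $\xc$.

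First, fix $\sigma\colon\kappa\toi\C$. By the algebraicity hypothesis, the family over $\kappa[[t]]$ base changes along $\sigma$ to a convergent holomorphic family over a small disk $\Delta$. The closures, special fibres and $\xci$ are formed compatibly with this base change, since all are defined by linear algebra over $R$ and $R/\xm^2$. Corollary~\ref{introcor:limit} then gives
\[
u_\sigma(0) = \sigma\big((A_1^s,A_2^s;B_1^s,B_2^s)_{\xci}\big).
\]
Two points need checking here. The limit datum $\xci$ attached to $\kappa[[t]]$ must map under $\sigma$ to the one attached to $\co_{\Delta,0}$. Also, the uniformiser $t$ is the same in both settings, because $\reg_t$ depends only on $t \bmod \xm^2$.

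Second, I compare two choices of $\xc$. From the defining formula~\eqref{eq:xc-cr}, the degenerate cross-ratio is a ratio of elements of the one-dimensional space $\det V$. Each factor depends on $\xc_{ij}$ linearly through~\eqref{eq:detV}. Let $\xc'$ be another datum and write $\xc'_{ij}=\lambda_{ij}\xc_{ij}$ with $\lambda_{ij}\in\kappa^\times$. Then
\[
(A_1^s,A_2^s;B_1^s,B_2^s)_{\xc'} = (A_1^s,A_2^s;B_1^s,B_2^s)_{\xc}\cdot \frac{\lambda_{11}\lambda_{22}}{\lambda_{12}\lambda_{21}},
\]
which is precisely the identity $(\cdot)_{\xc'}=(\cdot)_{\xc}\cdot \xc'^\otimes/\xc^\otimes$. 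Applying this with $\xc=\xca$ and $\xc'=\xci$ gives
\[
(\cdot)_{\xci}=(\cdot)_\kappa\cdot \xci^\otimes/\xca^\otimes \quad\text{in } \kappa^\times/\co_\kappa^\times.
\]
Substituting into the first step and taking $\log\abs{\sigma(\cdot)}$ yields the first formula modulo $\log\abs{\co_\kappa^\times}_\sigma$.

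Third, I sum over all $\sigma$. For any $x\in\kappa^\times$ we have $\prod_\sigma\abs{\sigma(x)}=\abs{N_{\kappa/\Q}(x)}$, and units have norm $\pm1$. Hence the ambiguity from $\co_\kappa^\times$ disappears after summing, and the sum of the two logarithms becomes $\log N$ of each term, well defined in $\R$. This is consistent with Lemma~\ref{lem:norm_of_crossratio}, which shows the norm is well defined.

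I expect the main obstacle to be the first step. One must verify carefully that $\xci$, constructed in Proposition~\ref{prop:limit-perturbation} from the $R$-integral structure, commutes with the base change $\kappa[[t]]\to\C\{t\}$ along $\sigma$. The subtle point is that the intersections $\hat A_i^s\cap\hat B_j^s$ and their first-order separations must be computed by the same equations in both settings. I would argue this through flatness of the closures: the closures are given by saturated lattices, and saturation commutes with the faithfully flat extension to convergent power series.
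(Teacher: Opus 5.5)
Your proposal is correct and follows the argument the paper leaves implicit (it gives no separate proof). That argument applies Corollary~\ref{introcor:limit} for each $\sigma$, then uses that the degenerate cross-ratio depends on $\xc$ only multiplicatively through $\xc^\otimes$, so that $(\cdot)_{\xci} = (\cdot)_\kappa \cdot \xci^\otimes/\xca^\otimes$, and finally sums using $\prod_\sigma \abs{\sigma(x)} = \abs{N_{\kappa/\Q}(x)}$. One small correction to your first step: there is no ring map $\kappa[[t]]\to\C\{t\}$ extending $\sigma$ and fixing $t$, so compare the two $\xc_{\mathrm{lim}}$ by base changing from the algebraic local ring of the curve; there the saturated lattices are direct summands, hence stay saturated under any extension sending $t$ to a uniformiser.
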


The term correcting the arithmetic cross-ratio is the analogue of the Kodaira--Spencer term in~\cite[Main~Theorem]{BlochDeJongSertoz2023} and compares the rate of separation induced by the family against the integral normal directions. 

We emphasize that the norm of the arithmetic cross-ratio can be expressed as indices of $\co_\kappa$-lattices, see~Lemma~\ref{lem:norm_of_crossratio}, which appears above as a limit period. Furthermore, the correction term $\log N(\xci^\otimes/\xca^\otimes)$ depends on the choice of uniformiser; the ``arithmetically meaningful'' uniformiser is arguably the one for which this term vanishes. Such a uniformiser exists, as is clear upon inspection of~\eqref{eq:arch_divergence}, when $\deg\big((\underline A_1-\underline A_2)\cdot(\underline B_1-\underline B_2)\big) \neq 0$.

\subsection{Acknowledgements} 
We thank Spencer Bloch and the organisers of the Regulators V conference in Pisa. We thank Zhelun Chen and Robin de Jong for fruitful discussions. We are grateful to Riccardo Pengo for suggestions on an earlier draft.

This paper is adapted from the first named author's master's thesis~\cite{Bakker2026} from Leiden University and Delft University, which at 92 pages is a leisurely introduction to many of the topics covered here. Theorem~\ref{introthm:aug_height_is_cross} here and Theorem E there agree. Our proof of Theorem~\ref{introthm:dvr} is a streamlined version of the one given there~\cite[Thm~3.19]{Bakker2026}. Theorem~\ref{introthm:reg} here and its corollary are inspired by, though greatly generalise, Theorem D there.

%% file: sections/01_correspondence.tex
\section{The incidence correspondence over the Grassmannian}

Let $K\subset \C$ be a field and let $V$ be a $(n+1)$-dimensional $K$-vector space. We let $\Gr(k+1,V)$ denote the space of $(k+1)$-dimensional linear subspaces of $V$. We also use the notation $\G(k,\pv)$, to refer to $\Gr(k+1,V)$, when working with projective spaces. When $V = K^{n+1}$, we will use $\Gr(k+1, n+1)$ and $\G(k, n)$ instead.

For a $k$-plane $A \subset \pn$ we will write $[A]\in \G(k,\pv)$ for the corresponding point in the Grassmannian. Let $A^\between  \subset \G(n-k-1,\pv)$ denote the locus of $(n-k-1)$-planes intersecting $A$. 

With respect to the Plücker embedding, $\G(n-k-1,\pv) \subset \Pp(\bigwedge^{n-k} V)$, the  subspace
$A^\between$ is cut out by the linear functional $a^\perp$, where $a^\perp$ is a generator of $\det {\hat A}^\perp$.

Now given a complemetary configuration of type $(k,n)$, $A_i,\, B_j\subset \pv$, we consider the cohomology group 
\begin{equation}\label{eq:hg}
\ch_\G \coleq \H^{1}(\G(k,n) - B_1^\between \cup B_2^\between, \{[A_1],[A_2]\})(1)
\end{equation}
with its mixed Hodge structure. Since $\G(k,n)$ is simply connected, it follows that
\[\ch_\G \in \Ext^1_{\ZMHS}(\Z(0),\Z(1)),\]
where we choose the generators $A_1-A_2$ and $B_1^\between - B_2^\between$ for the two extremal graded pieces of $\ch_\G$.

\begin{proposition}\label{prop:grass_height}
With the usual identification of the extension group $\Ext^1_{\ZMHS}(\Z(0),\Z(1))$ with $\C^\times$, the value corresponding to $\ch_\G$ equals the cross-ratio $(A_1,A_2;B_1,B_2)$.
\end{proposition}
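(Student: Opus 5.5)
We compute the extension class of $\ch_\G$ through the standard recipe for a $\H^1$ of a curve-free situation: a class in $\Ext^1_{\ZMHS}(\Z(0),\Z(1))$ of the form $\H^1(X-D,\{p,q\})(1)$ is determined by a rational (or meromorphic) function $f$ on $X$ with $\operatorname{div}(f)=D_1-D_2$. Its value in $\C^\times$ is $f(p)/f(q)$, up to the sign convention fixed by the orientation. The reason is as follows. The Hodge filtration $F^1$ of $\H^1(X-D)(1)$ is spanned by the logarithmic form $\frac{1}{2\pi{\tt i}}\,\dd\log f$, whose residues are $\pm1$ along $D_1,D_2$. The integral lift of the generator $p-q$ is a path $\gamma$ from $q$ to $p$. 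Pairing them gives $\frac{1}{2\pi{\tt i}}\int_\gamma \dd\log f = \frac{1}{2\pi{\tt i}}\log(f(p)/f(q))$, and this is well defined modulo $\Z$. Exponentiating recovers $f(p)/f(q)\in\C^\times$ under Carlson's identification. I will spell this out in the Grassmannian setting, using simple connectivity of $\G(k,n)$ to guarantee that $\H^1(\G(k,n))=0$. That vanishing ensures the mixed Hodge structure of $\ch_\G$ has exactly the two graded pieces $\Z(0)$ (from $\H^0$ of the two points, modulo constants) and $\Z(1)$ (from the residues along $B_1^\between - B_2^\between$).

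The key steps, in order, are these.

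First, I construct the function. In the Plücker embedding $\G(k,n)\subset \Pp(\bigwedge^{k+1}V)$, the divisor $B_j^\between$ is the hyperplane section cut out by the linear functional $b_j^\perp\in\bigwedge^{k+1}V^\vee$. This is the statement quoted just before~\eqref{eq:hg}, with the roles of $A$ and $B$ swapped. The Plücker hyperplane section is reduced (it is a Schubert divisor generating $\operatorname{Pic}\G=\Z$). Hence $f = b_1^\perp/b_2^\perp$ is a rational function on $\G(k,n)$ with $\operatorname{div}(f)=B_1^\between-B_2^\between$. It is regular and invertible on $\G(k,n)-B_1^\between\cup B_2^\between$.

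Second, I identify the extension. I write down the exact sequence
\[0\to \tilde\H^0(\{[A_1],[A_2]\})(1)\to \ch_\G\to \H^1(\G-B_1^\between\cup B_2^\between)(1)\to 0,\]
and check via the Gysin/residue sequence that the right-hand term is $\Z(0)$, generated by a class with residue $\pm1$ along $B_1^\between$ and $\mp1$ along $B_2^\between$, matching the chosen orientation. Its $F^0$-lift is $\frac{1}{2\pi{\tt i}}\dd\log f$, viewed as a relative class. The pure part vanishes because the form restricts to zero on the points. Its integral lift differs from it by the class of a path pairing. Carlson's extension class is therefore $\exp\bigl(\int_{[A_2]}^{[A_1]} \dd\log f\bigr) = f([A_1])/f([A_2])$.

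Third, I evaluate. At $[A_i]$ with Plücker coordinates $a_i$, we get $f([A_i]) = b_1^\perp(a_i)/b_2^\perp(a_i)$. This is independent of the scaling of $a_i$ because numerator and denominator are both linear in $a_i$. Hence
\[\frac{f([A_1])}{f([A_2])}=\frac{b_1^\perp(a_1)}{b_2^\perp(a_1)}\frac{b_2^\perp(a_2)}{b_1^\perp(a_2)},\]
which is exactly Definition~\ref{definintiongencr}.

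The main obstacle is sign and orientation bookkeeping in the second step. Three things must be checked: that the chosen generators $A_1-A_2$ and $B_1^\between-B_2^\between$ produce $f$ rather than $f^{-1}$; that the Tate twist $(1)$ absorbs the factor $2\pi{\tt i}$ correctly; and that the conventions of~\cite{Carlson1980} and~\cite[Definition~2.3]{BurgosGilGoswamiPearlstein2022} line up. I would settle this by reducing to the case $n=1$, $k=0$. There $\G(0,1)=\p$, each $B_j^\between$ is a point, and the statement is the classical identification of $\H^1(\p-\{p_3,p_4\},\{p_1,p_2\})(1)$ with the cross-ratio quoted in the introduction. I would then fix the conventions so that they agree in that case. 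A secondary point is confirming that $B_j^\between$ is a reduced, irreducible divisor, so that the residue of $\dd\log f$ is exactly $\pm1$; this is standard for Schubert divisors.
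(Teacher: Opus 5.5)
Your proposal is correct and is essentially the paper's proof: you take $f=b_1^\perp/b_2^\perp$ restricted to $\G(k,\pv)$, use $\operatorname{div}(f)=B_1^\between-B_2^\between$ so that $\dd\log f$ lifts the generator of $\Z(0)$ into $F^0\ch_\G$, and exponentiate $\int_\gamma\dd\log f$ over a path with $\partial\gamma=[A_1]-[A_2]$ to obtain $f([A_1])/f([A_2])$, which is the cross-ratio. Your extra checks (reducedness of the Schubert divisor, and the weight exact sequence via $\H^1(\G)=0$) fill in details the paper leaves implicit; the one thing to fix is the normalisation in your first paragraph, where the logarithmic form lies in $F^0$ of the twisted group and exponentiating $\frac{1}{2\pi{\tt i}}\log(f(p)/f(q))$ needs $\exp(2\pi{\tt i}\,\cdot)$, whereas your second step, like the paper, correctly exponentiates $\int_\gamma\dd\log f$ directly.
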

\begin{proof}
    
    Let $f$ be the restriction of the rational function $b_1^\perp / b_2^\perp$ to $\G(k,\pv)$. The divisor $B_1^\between-B_2^\between$ is the principal divisor associated to $f$. Therefore, the form $\dd \log(f) = \frac{\dd f}{f}$ lifts the generator of $\Z(0)$ to $F^0\ch_\G$. 

    Pick a smooth $1$-chain $\gamma$ with boundary $\partial \gamma = A_1 - A_2$. The value of the integral modulo $\Z \tpip$ of $\dd \log (f)$ over $\gamma$ is independent of the choice of this path, giving
    \[\int_\gamma \dd\log(f) = \log(f)(A_1) - \log(f)(A_2) \mod \Z\tpip .\]

    The exponential of this integral is the extension class for $\ch_\G$, which is a general fact for $\H_1 \simeq \H^1(1)$ when there is a divisorial boundary oriented by a principle divisor relative to the boundary of a path. Clearly, this exponential evaluates to the cross-ratio formula.
\end{proof}

\begin{theorem}\label{thm:compare_extensions}
The $\ch_\Pp$ from~\eqref{eq:hp} and $\ch_\G$ from~\eqref{eq:hg} are isomorphic as extensions of mixed Hodge structures.
\end{theorem}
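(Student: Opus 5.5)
The plan is to realise the comparison through the incidence correspondence named in the section title. Set
\[
I \coleq \set{([A],x) \in \G(k,\pv)\times\pv : x \in A},
\]
with projections $p\colon I\to \G(k,\pv)$, a $\Pp^k$-bundle, and $q\colon I\to\pv$. The correspondence $q_*p^*$ sends $\H^{1}$ of the Grassmannian side to $\H^{2k+1}$ of the projective side, with a Tate twist by $k$, so it should map $\ch_\G$ to $\ch_\Pp$.

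For the pairs, let $U_\G = \G(k,n) - B_1^\between\cup B_2^\between$ and $U_\Pp = \pn - B_1\cup B_2$. A plane $A$ misses $B_1\cup B_2$ exactly when $[A]\in U_\G$. Hence $p^{-1}(U_\G)$ maps under $q$ into $U_\Pp$, and $p^{-1}(\{[A_1],[A_2]\})$ maps onto $A_1\cup A_2$. Working with homology, or equivalently compactly supported duals, I will define a morphism of mixed Hodge structures
\[
\Phi \coleq q_*\circ p^* \colon \H_1(U_\G,\{[A_1],[A_2]\}) \too \H_{2k+1}(U_\Pp, A_1\cup A_2).
\]
This sends a path $\gamma$ in $U_\G$ to the $(2k+1)$-chain swept out by the planes $A_{\gamma(s)}$. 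It is a morphism of MHS up to the appropriate twist $(k)$ because $p^*$ (Gysin/pullback for a smooth proper bundle with fibre $\Pp^k$) and $q_*$ (proper pushforward) are algebraic. I then dualise, using $\H^1(1)\simeq\H_1$ in the Grassmannian case and the analogous Lefschetz/Alexander identifications in the projective case.

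Next I check that $\Phi$ respects the weight filtrations and the chosen generators on the graded pieces. On the bottom piece, the boundary of a path from $[A_2]$ to $[A_1]$ is the cycle $[A_1]-[A_2]$ in $\H_0(\{[A_1],[A_2]\})$. Under $\Phi$ this goes to the fundamental classes $A_1 - A_2$ in $\H_{2k}(A_1\cup A_2)$, which is exactly the generator used to orient $\ch_\Pp$.

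On the top piece, the relevant class is a small loop around $B_j^\between$, i.e.\ a tube class in $\H_1(U_\G)$ dual to $B_1^\between - B_2^\between$. It maps to a class in $\H_{2k+1}(U_\Pp)\simeq \Z(k+1)$, which is generated by the linking sphere (an $S^{2k+1}$ tube) around $B_1$ minus the one around $B_2$. The cleanest check is by linking numbers. The pairing of $\Phi(\text{loop around } B_j^\between)$ with $B_j$ equals the intersection number of $p^{-1}(\text{disk transverse to } B_j^\between)$ with $q^{-1}(B_j)$. This is $\pm1$, because a generic pencil-like $1$-parameter family of $k$-planes meets the $(n-k-1)$-plane $B_j$ transversally once when it crosses the Plücker hyperplane $b_j^\perp=0$ transversally.

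Once $\Phi$ is a morphism of extensions inducing isomorphisms (identity on the chosen generators) on both graded pieces, it is an isomorphism of extensions by the five lemma. Combined with Proposition~\ref{prop:grass_height}, this gives the result.

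The main obstacle is this top-weight check: computing the rank-one groups $\H_{2k+1}(U_\Pp)$ and verifying that the degree-one transversality with the Plücker hyperplane section $B_j^\between$ gives exactly the generator, signs included. A secondary obstacle is keeping the orientation and sign conventions consistent across the two dualities, so that the extension class comes out as $(A_1,A_2;B_1,B_2)$ and not its inverse.
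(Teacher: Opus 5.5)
Your proposal is essentially the paper's proof: the same incidence correspondence, with your homological $\Phi = q_*\circ p^*$ being the transpose of the paper's $p_!\circ q^*$, followed by the check on the two graded pieces, where your transversality computation with the Plücker hyperplane makes explicit the residue step that the paper only calls analogous. One small correction: $\H_{2k+1}(\pn - B_1\cup B_2)\simeq\Z$ is generated by the normal sphere around $B_1$ alone (the sphere around $B_2$ is its negative, so their difference is twice a generator), and it is detected by linking with $B_1-B_2$, i.e.\ by intersecting $B_j$ with a bounding chain that avoids $B_{j'}$ --- which is exactly what your swept-out disk is.
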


\begin{proof}
We have the incidence correspondence
\[
\begin{tikzcd}[row sep=large]
& \ci = \{(\Pi, x) \in \G(k,\pv) \times \pv \mid x \in \Pi\}
\arrow[dl,"p"'] \arrow[dr,"q"] & \\
\G(k,\pv) & & \pv
\end{tikzcd}
\]
where $p$ and $q$ are the projections to $\G(k,\pv)$ and $\pv$, respectively.

For notational convenience let us write: $\G$ and $\Pp$ for the Grassmannian and the projective space, $A = A_1 \cup A_2, B= B_1 \cup B_2, [A]=\{[A_1],[A_2]\}$, and $B^\between = B^\between_1 \cup B^\between_2$. 

The morphisms $p,\, q$ of the correspondence induce morphisms 
\[\begin{tikzcd}
	{(\G - B^\between,[A])}  & {(\mathcal{I} - q^{-1}B,p^{-1}[A])} &  {(\Pp - B,A)}.
	\arrow["p"', from=1-2, to=1-1]
	\arrow["q", from=1-2, to=1-3]
\end{tikzcd}\]

    In particular, we get morphisms of MHS 
    \[
    \H^{2k+1}(\Pp - B, A )(k+1) \overset{q^*}{\too} \H^{2k+1}(\ci - q^{-1}(B),p^{-1}(A))(k+1) \overset{p_!}{\too} \H^1(\G - B^\between, [A])(1),
    \]
    where the latter morphism is ``integration along fibres'', the Lefschetz dual to $p^*$, see \cite[p.8]{Steenbrink1999} or \cite[Theorem 4.1]{deCataldoMigliorini2004}.

    The composition gives a morphism $\phi \colon \ch_\Pp \to \ch_\G$. To see that this is an isomorphism as extensions, we need only show that the generators of the weight graded pieces map to one another.

    Note, for instance, that the generator of $\Z(1)$ in $\ch_\Pp$ comes from the coboundary map $\H^{2k}(A)(k+1) \to \H^{2k+1}(\Pp - B, A )(k+1)$. Looking at what $p$ and $q$ induce along the coboundary component for all three cohomologies we get
    \[
    \H^{2k}(A)(k+1) \overset{q^*}{\too} \H^{2k}(p^{-1}(A))(k+1) \overset{p_!}{\too} \H^0([A])
    \]
    both of which are evidently isomorphisms. 
    
    To prove that the generators of $\Z(0)$ map to one another, we argue analogously this time using the residue maps instead of the coboundary maps.
\end{proof}

\begin{proof}[Proof of Theorem~\ref{introthm:aug_height_is_cross}]
    The augmented height pairing is the extension class of $\ch_\Pp$, which we compute by combining Theorem~\ref{thm:compare_extensions} with Proposition~\ref{prop:grass_height}.
\end{proof}

%% file: sections/02_intermezzo.tex
\section{Intermezzo: properties of the cross-ratio}\label{sec:intermezzo}

In this section we record equivalent formulations of the cross-ratio for a complementary configuration $A_i,\,B_j$ and the action of the admissible permutations on the planes. In particular, when all four planes have the same dimension, the full $S_4$-action can be considered and the change in the values of the cross-ratio can be directly compared. We also show that the cross-ratio is unchanged when projecting from $A_1 \cap A_2$ (resp.\ $B_1\cap B_2$) or when restricting to the subspaces $\span(A_1 \cup A_2)$ (resp.\ $\span(B_1 \cup B_2)$). 

\subsection{Equivalent formulations of the cross-ratio}
Let $A_i,\,B_j \subset \pv$ be a complementary configuration of type $(k,n)$. As in the introduction, choose $a_i$'s, $a_i^\perp$'s, $b_j$'s and $b_j^\perp$'s.
 Via the inclusions $\hat{A}_i, \hat{B}_j\xhookrightarrow{} V$ and $\hat{A}_i^\perp, \hat{B}_j^\perp\xhookrightarrow{} V^\vee$ we obtain elements $a_i\wedge b_j\in \det V$ and $a_i^\perp\wedge b_j^\perp\in \det V^\vee$.
 Since $\det V$ is one dimensional, any two non-zero vectors in it have a well defined ratio in $K^\times$, similarly for $\det V^\vee$.
 
 We now observe the following equalities:
    \[
    (A_1,A_2;B_1,B_2) \coleq \frac{b_1^\perp(a_1)}{b_1^\perp(a_2)}\frac{b_2^\perp(a_2)}{b_2^\perp(a_1)}= \frac{a_1\wedge b_1}{a_1\wedge b_2}\frac{a_2\wedge b_2}{a_2\wedge b_1} = \frac{a^\perp_1\wedge b^\perp_1}{a^\perp_1\wedge b^\perp_2}\frac{a^\perp_2\wedge b^\perp_2}{a^\perp_2\wedge b^\perp_1}
    \]

    Choose an isomorphism $\varphi \colon \det V \simeq K$ and consider the linear functional $F_i \in \det \hat{B}_i^\perp$ given by $F_i(-) = \varphi\left(-\wedge b_i\right)$. It is clear that $F_i =\lambda_i b_i^\perp$ for some $\lambda_i \in K^\times$, so the scalars cancel in each ratio and
    $$
    \frac{b_1^\perp(a_1)}{b_1^\perp(a_2)}\frac{b_2^\perp(a_2)}{b_2^\perp(a_1)} = \frac{F_1(a_1)}{F_1(a_2)}\frac{F_2(a_2)}{F_2(a_1)} = \frac{a_1\wedge b_1}{a_1\wedge b_2}\frac{a_2\wedge b_2}{a_2\wedge b_1}.
    $$
    The third equality follows by a similar argument. 
    \subsection{Symmetries of the cross-ratio}
    It is clear from the identities above that the generalised cross-ratio has two symmetries:
\begin{equation*}
(A_1,A_2;B_1,B_2) = (B_1,B_2;A_1,A_2), \quad (A_1,A_2;B_1,B_2) = (A_1,A_2;B_2,B_1)^{-1}.
\end{equation*}

\medskip

 If $n=2k+1$ and $A_i,\,B_j$ are pairwise disjoint, then the full permutation group $S_4$ on four letters can act on the input of the cross-ratio. 

To compare the value of the cross-ratios under the permutation action, we introduce some notation.

Decompose $V = \hat{A}_1\oplus \hat{A}_2$. Since the $\hat{B}_j$'s are complementary to both $\hat{A}_i$'s they can be realised as 
\[
\hat{B}_j  = \{v + \varphi_j (v)\mid v\in \hat{A}_1\}
\]
where $\varphi_j \colon \hat{A}_1 \to \hat{A}_2$ is a linear map for $j=1,\,2$.

Then, setting $e \coleq a_1 \wedge a_2$ we have 
\[
    a_1\wedge b_1 = (\det\varphi_1) e, \; 
    a_1\wedge b_2 = (\det\varphi_2)e, \;
    b_1\wedge b_2 = \det(\varphi_2 -\varphi_1)e, \;
    a_2 \wedge b_1 = a_2\wedge b_2 = (-1)^{k+1}e.
\]

Thus we get
\[
(A_1,A_2;B_1,B_2) = \frac{\det\varphi_1}{\det\varphi_2} = \det(\varphi_2^{-1}\varphi_1),
\]
so the cross-ratio is determined by the operator $\varphi_2^{-1}\varphi_1 \colon \hat{A}_1\to \hat{A}_1$. Let $\lambda_0, \dots, \lambda_k$ be the eigenvalues of $\varphi_2^{-1}\varphi_1$. Now the proof of the proposition below is immediate.

\begin{proposition}\label{prop:symmetries}
  The cross-ratio satisfies the following symmetries tabulated below:
  \medskip
    \begin{center}
    \small
    \setlength{\tabcolsep}{3pt}
    \begin{tabular}{@{}cccccc@{}}
      $(A_1,A_2;B_1,B_2)$ & $(A_1,A_2;B_2,B_1)$ & $(A_1,B_1;A_2,B_2)$ & $(A_1,B_1;B_2,A_2)$ & $(A_1,B_2;A_2,B_1)$ & $(A_1,B_2;B_1,A_2)$ \\
      \midrule
      $\displaystyle\prod_{i=0}^k \lambda_i$ & $\displaystyle\prod_{i=0}^k \lambda_i^{-1}$ & $\displaystyle\prod_{i=0}^k (1-\lambda_i)$ & $\displaystyle\prod_{i=0}^k (1-\lambda_i)^{-1}$ & $\displaystyle\prod_{i=0}^k \frac{\lambda_i-1}{\lambda_i}$ & $\displaystyle\prod_{i=0}^k \frac{\lambda_i}{\lambda_i-1}$ \\
    \end{tabular}
    \end{center}
\end{proposition}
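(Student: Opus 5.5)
The first column is $\det(\varphi_2^{-1}\varphi_1)=\prod_i\lambda_i$, as computed just before the statement. The second column follows from the symmetry $(A_1,A_2;B_1,B_2)=(A_1,A_2;B_2,B_1)^{-1}$, and the sixth is likewise the inverse of the fifth. So only the third and fifth columns need a genuine computation; the fourth and sixth then follow by inversion.

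For all of these I would use the wedge formula
\[
(X_1,X_2;Y_1,Y_2)=\frac{x_1\wedge y_1}{x_1\wedge y_2}\frac{x_2\wedge y_2}{x_2\wedge y_1},
\]
which applies because all four planes have dimension $k$ and are pairwise disjoint, so every pair is complementary. I fix the generators $a_1,a_2$ and set $e=a_1\wedge a_2$. Let $v_0,\dots,v_k$ be the basis of $\hat A_1$ with $a_1=v_0\wedge\dots\wedge v_k$. Then $\hat B_j$ has basis $v_i+\varphi_j(v_i)$, and I take $b_j$ to be the wedge of these. The needed wedges are listed before the statement, including $b_1\wedge b_2=\det(\varphi_2-\varphi_1)e$. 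The remaining wedges are obtained by anticommuting blocks of degree $k+1$, which introduces the sign $(-1)^{(k+1)^2}=(-1)^{k+1}$; for example $b_1\wedge a_1=(-1)^{k+1}a_1\wedge b_1$.

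\textbf{Third column.} For $(A_1,B_1;A_2,B_2)$ the formula gives
\[
\frac{a_1\wedge a_2}{a_1\wedge b_2}\cdot\frac{b_1\wedge b_2}{b_1\wedge a_2}
=\frac{e}{\det\varphi_2\,e}\cdot\frac{\det(\varphi_2-\varphi_1)\,e}{(-1)^{k+1}a_2\wedge b_1}.
\]
Since $a_2\wedge b_1=(-1)^{k+1}e$, the signs cancel. The result is
\[
\det(\varphi_2-\varphi_1)/\det\varphi_2=\det(1-\varphi_2^{-1}\varphi_1)=\prod_i(1-\lambda_i).
\]

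\textbf{Fifth column.} For $(A_1,B_2;A_2,B_1)$ the same procedure gives
\[
\frac{a_1\wedge a_2}{a_1\wedge b_1}\cdot\frac{b_2\wedge b_1}{b_2\wedge a_2}
=\frac{1}{\det\varphi_1}\cdot\frac{(-1)^{k+1}\det(\varphi_2-\varphi_1)}{a_2\wedge b_2}.
\]
Using $a_2\wedge b_2=(-1)^{k+1}e$, this equals
\[
\det(\varphi_2-\varphi_1)/\det\varphi_1=\det(\varphi_1^{-1}\varphi_2-1)=\prod_i(\lambda_i^{-1}-1)=\prod_i\frac{1-\lambda_i}{\lambda_i}.
\]
This differs from the stated $\prod_i\frac{\lambda_i-1}{\lambda_i}$ by $(-1)^{k+1}$. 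Recomputing with extra care, $b_2\wedge a_2$ equals $(-1)^{k+1}a_2\wedge b_2=e$, which gives $\det(\varphi_2-\varphi_1)e\cdot(-1)^{k+1}/(\det\varphi_1\,e)=(-1)^{k+1}\prod_i(1-\lambda_i)/\lambda_i$, and this matches the table. So my first pass dropped a sign, and this is where I would expect trouble.

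\textbf{Main obstacle.} The only real difficulty is sign bookkeeping. I would handle it by expressing every wedge as a multiple of the single reference vector $e$ before forming any ratio, and by applying the block-swap rule exactly once per wedge. This makes the signs in each column visible, and they must be checked column by column. As a sanity check I would set $k=0$, where the columns reduce to the classical six cross-ratios $\lambda,1/\lambda,1-\lambda,1/(1-\lambda),(\lambda-1)/\lambda,\lambda/(\lambda-1)$.
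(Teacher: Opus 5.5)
Your proposal is correct and follows the paper's route. The paper also reads the table off the listed wedges, written as multiples of $e=a_1\wedge a_2$, and simply calls the proof immediate; your use of the inversion symmetry $(X_1,X_2;Y_1,Y_2)=(X_1,X_2;Y_2,Y_1)^{-1}$ for the second, fourth and sixth columns is a legitimate shortcut. In a final write-up, drop your first pass at the fifth column, which put $a_2\wedge b_2$ where $b_2\wedge a_2$ belongs, and keep only the corrected computation: $b_2\wedge a_2=(-1)^{(k+1)^2}a_2\wedge b_2=e$, so the value is $(-1)^{k+1}\det(\varphi_2-\varphi_1)/\det\varphi_1=\prod_i(\lambda_i-1)/\lambda_i$.
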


\begin{remark}
The formulas of \cite[Theorem 3]{HollandSparling2025} agree with Proposition \ref{prop:symmetries}, however our proof does not require the assumption that $\Char K\neq 2$ imposed there. 
\end{remark}

\subsection{Passing to quotients and subspaces}
Let $A_i,\,B_j\subset \pv$ be a complementary configuration of type $(k,n)$. Let $S=\hat{A}_1 \cap \hat{A}_2$, of dimension $\dim S\ge 0$.

The cross-ratio remains unchanged when passing to the quotient space $V/S$. That is, the spaces $A'_i = \mathbb{P}( \hat{A}_i / S)$ and $B'_j = \mathbb{P}((\hat{B}_j + S)/S)$ give a complementary configuration of type $(k-\dim S,\,n-\dim S)$ in $\Pp(V/S)$ and
\[
(A_1,A_2;B_1,B_2) = (A'_1,A'_2;B'_1,B'_2).
\]

To prove the equality above, note that the generator $ s \in\det S$ divides the generators $a_1$ and $a_2$, i.e., $a_i = s \wedge a_i'$. Since all four terms $a_i\wedge b_j$ pick up this same factor, removing $s$ preserves the ratio. 

Dually, suppose $\hat{A}_1+ \hat{A}_2\subset W$ where $W\subset V$. In this case, restricting $b^\perp_j$'s to $\bigwedge^{k+1}W$ does not change the pairing with the $a_i$'s. So writing $A'_i$ for the subspace $A_i$, now viewed as a linear subspace of $\mathbb{P}(W)$, and $B'_j = B_j\cap \mathbb{P}(W)$ we conclude
\[
(A_1,A_2;B_1,B_2) = (A'_1,A'_2;B'_1,B'_2).
\]

\begin{remark}
    Goncharov~\cite{Goncharov2026} uses the second reduction on the cross-ratio side of his inductive proof.
\end{remark}

%% file: sections/03_intersection.tex
\section{Intersection degree and the valuation of the cross-ratio}\label{sec:intersect}

In this section we express the intersection degree of two complementary linear subspaces of a projective space over a DVR as the valuation of a determinant, and thereby prove Theorem~\ref{introthm:dvr}. The argument combines lattice linear algebra with Fulton's intersection theory.

\medskip

Let $R$ be a DVR, with quotient field $K$, residue field $\kappa$, and valuation $v_R$. Let $t$ be a uniformiser of $R$.

Let $\cv \simeq R^{n+1}$ be a free $R$-module of rank $n+1$ and let $V \coleq \cv \otimes_R K \simeq K^{n+1}$. We view $\cv \toi V$ as an $R$-submodule, and throughout write a calligraphic letter for an $R$-lattice and the matching latin letter for its base change to $K$.

Let $\cv^\vee$ denote the $R$-linear dual of $\cv$, $\hom_R(\cv,R)$. We define $\Pp(\cv) \coleq \proj(\Sym^* \cv^\vee) \simeq \pn_R$. Note that the generic fibre of $\Pp(\cv)$ over $\Spec(R)$ is $\Pp(V) \simeq \pn_K$.

For a $K$-subspace $W \subset V$, let $\cw \coleq W \cap \cv$. Since $\cv$ is free and $R$ is a DVR, $\cw$ is free of rank $\dim W$ and saturated, i.e.\ $\cv/\cw$ is torsion free. The surjection $\cv^\vee \tos \cw^\vee$ then induces a closed immersion $\Pp(\cw) \toi \Pp(\cv)$~\cite[Tag~01N1]{stacks-project}, realising $\Pp(\cw)$ as the Zariski closure of $\Pp(W) \subset \Pp(V)$.

Suppose now that $\cw_1, \cw_2 \subset \cv$ are both saturated and $V = W_1 \oplus W_2$; this means that the map $\mu \colon \cw_1 \to \cv/\cw_2$ has a cokernel that is pure torsion.

If we pick generators $w_i \in \det(\cw_i)$ then observe that $w_i$ will also serve as a generator for $\det(W_i)$. Furthermore, use the map $\cw_1 \oplus \cw_2 \to \cv$ to view $w_1 \wedge w_2$ as an element of $\det(\cv)$.

\begin{lemma}\label{lem:wedge-det}
  Let $w_i$ generate $\det \cw_i$ and regard $w_1\wedge w_2$ as an element of $\det \cv$ via $\cw_1\oplus\cw_2 \to \cv$. Then
  \[
    v_R\big(\det(\cw_1\oplus\cw_2 \to \cv)\big) = v_R(\det \mu) = v_R(w_1\wedge w_2).
  \]
\end{lemma}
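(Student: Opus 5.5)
Let $r_i \coleq \operatorname{rank}\cw_i$, so $r_1 + r_2 = n+1$. The plan is to reduce all three quantities to the determinant of one square matrix over $R$ in adapted bases. The only non-formal input is the choice of basis for $\cv$. Since $\cw_2$ is saturated, $\cv/\cw_2$ is a finitely generated torsion-free, hence free, $R$-module of rank $r_1$. So the sequence $0\to\cw_2\to\cv\to\cv/\cw_2\to0$ splits, and we may choose a basis $e_1,\dots,e_{n+1}$ of $\cv$ whose last $r_2$ vectors form a basis of $\cw_2$. The images of $e_1,\dots,e_{r_1}$ then form a basis of $\cv/\cw_2$. We also fix a basis $u_1,\dots,u_{r_1}$ of $\cw_1$. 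Both valuations are unchanged under changing bases or generators by units of $R$, so it suffices to compute in these bases. In particular we may take $w_1 = u_1\wedge\dots\wedge u_{r_1}$ and $w_2 = e_{r_1+1}\wedge\dots\wedge e_{n+1}$.

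Write $u_j = \sum_i m_{ij} e_i$ with $m_{ij}\in R$. The matrix of $\cw_1\oplus\cw_2\to\cv$ in the bases $(u_j, e_{r_1+1},\dots,e_{n+1})$ and $(e_i)$ is block lower-triangular:
\[
\begin{pmatrix} M & 0 \\ * & I_{r_2}\end{pmatrix},
\]
where $M=(m_{ij})_{1\le i,j\le r_1}$. The top-left block $M$ is exactly the matrix of $\mu\colon\cw_1\to\cv/\cw_2$ in the chosen bases. Hence $\det(\cw_1\oplus\cw_2\to\cv)=\det M=\det\mu$, up to units coming from the choice of bases. This gives the first equality.

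For the second equality, expand $w_1\wedge w_2 = u_1\wedge\dots\wedge u_{r_1}\wedge e_{r_1+1}\wedge\dots\wedge e_{n+1}$ in $\det\cv = R\cdot e_1\wedge\dots\wedge e_{n+1}$. By multilinearity, every term of $u_1\wedge\dots\wedge u_{r_1}$ that involves some $e_i$ with $i>r_1$ dies after wedging with $w_2$. What remains is $\det M\cdot e_1\wedge\dots\wedge e_{n+1}$. Equivalently, $w_1\wedge w_2$ is the image of the generator of $\det(\cw_1\oplus\cw_2)$ under the top exterior power of the map, which is multiplication by its determinant. Measuring $w_1\wedge w_2$ against the generator $e_1\wedge\dots\wedge e_{n+1}$ of $\det\cv$ therefore yields $v_R(w_1\wedge w_2)=v_R(\det M)$.

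Finally, the hypothesis $V=W_1\oplus W_2$, equivalently $\operatorname{coker}\mu$ being torsion, guarantees $\det M\neq0$, so all three valuations are finite. There is no real obstacle. The one point needing care is the existence of an adapted basis, which uses the saturation of $\cw_2$ over a DVR, a PID. The saturation of $\cw_1$ is used only so that $w_1$ is a generator of $\det\cw_1$ in the first place.
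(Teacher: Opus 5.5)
Your proof is correct and is essentially the paper's argument: saturation of $\cw_2$ gives the splitting $\cv \simeq \cv/\cw_2 \oplus \cw_2$. In adapted bases the map $\cw_1\oplus\cw_2\to\cv$ is block lower-triangular with diagonal blocks $\mu$ and $\id_{\cw_2}$, and $w_1\wedge w_2$ is this determinant times a generator of $\det\cv$. (A minor aside: saturation of $\cw_1$ is not needed at all here, since any finitely generated submodule of $\cv$ is free over the PID $R$, so $\det\cw_1$ has a generator regardless.)
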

\begin{proof}
  Saturation of $\cw_2$ gives a splitting $\cv \simeq \cv/\cw_2 \oplus \cw_2$; in bases adapted to it the map $\cw_1\oplus\cw_2 \to \cv$ is block lower-triangular with diagonal blocks $\mu$ and $\id_{\cw_2}$, hence has determinant $\det\mu$. Fixing a generator $e$ of $\det \cv$, this determinant is by definition the scalar with $w_1\wedge w_2 = \det(\cw_1\oplus\cw_2 \to \cv)\cdot e$.
\end{proof}

The proof below uses the intersection theory of \cite[Ch.~14]{Fulton1998}. Although that chapter is written over a field, the formulas we use are valid over any regular base, in particular over our DVR~\cite[p.~395]{Fulton1998}. Recall that $\deg$ denotes the degree of the zero-cycles, which in this case are all supported over the closed point of $\Spec R$.

\begin{lemma}
  \label{lem:intersection}
  Let $\cw_1,\cw_2\subseteq\cv$ be saturated with $V = W_1\oplus W_2$. Then
  \[
   \deg ([\Pp(\cw_1)]\cdot_{\Pp(\cv)} [\Pp(\cw_2)]) = v_R(\det \left( \cw_1\oplus \cw_2 \to \cv \right) ).
  \]
\end{lemma}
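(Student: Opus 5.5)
We reduce the intersection to a situation where everything is explicit, using the structure of $R$-lattices. Write $\mu\colon \cw_1\to\cv/\cw_2$ as in Lemma~\ref{lem:wedge-det}; both are free $R$-modules of rank $r\coleq\dim W_1$. We first apply Smith normal form over the DVR $R$. We choose bases $e_1,\dots,e_r$ of $\cw_1$ and $f_1,\dots,f_r$ of $\cv/\cw_2$ with $\mu(e_i)=t^{m_i}f_i$ and $m_i\ge 0$, so that $v_R(\det\mu)=\sum_i m_i$. Lifting the $f_i$ to $\cv$ and using the splitting $\cv\simeq\cv/\cw_2\oplus\cw_2$ from saturation, we obtain coordinates in which $\cv=R^{r}\oplus\cw_2$. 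In these coordinates $\cw_2=0\oplus\cw_2$, and $\cw_1$ is spanned by the vectors $t^{m_i}f_i+g_i$ with $g_i\in\cw_2$.

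Next we straighten $\cw_1$ by an automorphism of $\cv$. The map $f_i\mapsto f_i$, $w\mapsto w$ on $\cw_2$ does not remove the $g_i$ directly. Instead we observe that the intersection degree depends only on the subschemes, and we try to deform. The cleaner route is to use Fulton's excess/refined intersection over the regular base: $[\Pp(\cw_1)]\cdot[\Pp(\cw_2)]$ is the refined Gysin pullback along the regular embedding $\Pp(\cw_2)\toi\Pp(\cv)$. That embedding is the zero locus of a section of the vector bundle $E=\co(1)\otimes(\cv/\cw_2)$, coming from the tautological map $\co(-1)\to\cv\to\cv/\cw_2$. By \cite[Ch.~14]{Fulton1998}, the intersection class is the localized top Chern class of $E|_{\Pp(\cw_1)}$ with respect to the restricted section $s$.

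On $\Pp(\cw_1)\simeq\Pp^{r-1}_R$ the section $s$ is the composite $\co(-1)\to\cw_1\xrightarrow{\mu}\cv/\cw_2$. In the Smith bases its components are $t^{m_i}x_i$, where the $x_i$ are the homogeneous coordinates. The section therefore splits as a direct sum of line-bundle sections $s_i=t^{m_i}x_i$ of $\co(1)$, and the localized top Chern class is the product of the localized first Chern classes. This gives
\[
\prod_i \big([\mathrm{div}(t^{m_i})]+[x_i=0]\big)=\prod_i\big(m_i[\Pp_\kappa^{r-1}]+H_i\big)
\]
as a refined class supported on the zero scheme $\{t^{m_i}x_i=0\ \forall i\}\subset\Pp^{r-1}_\kappa$. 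Expanding, every term containing two or more flat hyperplane factors $H_i$ meets trivially over the special fibre; more precisely, the common zero locus of the $x_i$ alone is empty. The surviving degree is computed by pushing forward to $\Spec R$. We do this by a degree count: $\deg$ of $c_{r}$ localized equals the length of the zero-dimensional part, which for the monomial section is $\sum_i m_i$ times the degree of a line, since the $x_j$ for $j\neq i$ cut out a point.

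To make this rigorous, we would rather argue via the Koszul/length computation. Locally on the chart $x_j\neq0$, the zero scheme is $\Spec R[x]/(t^{m_i}x_i)_{i\ne j},\,t^{m_j})$. If this is not zero-dimensional, we use instead the Chern class formula $\deg\big(c_r(E)\cap[\Pp(\cw_1)]\big)$ interpreted on the total space, combined with deformation invariance. Concretely, the localized class pushed to $\Spec R$ equals $\sum_i m_i$ by the multiplicativity $c_{\mathrm{top}}(\oplus L_i,\oplus s_i)=\prod c_1(L_i,s_i)$ and the identity $c_1(\co(1),t^{m}x)=m[\text{fibre}]+[x=0]$.

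The main obstacle is justifying this excess-intersection computation over $R$. The zero locus typically has positive dimension in the special fibre, so one must carefully apply the localized Chern class formalism of \cite[Ch.~14]{Fulton1998} over a DVR base and check that the cross terms contribute exactly $\sum_i m_i$. Combined with Lemma~\ref{lem:wedge-det}, this gives the statement.
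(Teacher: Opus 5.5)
Your set-up matches the paper's. You take the section of $\cv/\cw_2\otimes\co(1)$ cutting out $\Pp(\cw_2)$, restrict it to $\Pp(\cw_1)$, factor it through $\mu$, and use Smith normal form to split it into the sections $t^{m_i}x_i$ of $\co(1)$. Your evaluation plan is also legitimate, and somewhat more direct than the paper's repeated residual formula \cite[Example~14.1.4]{Fulton1998} with its telescoping sum: write the localized class as $D_1\cdots D_r\cdot[\Pp(\cw_1)]$, where $D_i=\mathrm{div}(t^{m_i}x_i)=m_iD+H_i$ and $D$ is the special fibre, and expand.

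\textbf{The gap.} Carrying out that expansion is the whole content of the lemma, and you leave it undone while stating the wrong vanishing mechanism. The terms with two or more factors $H_i$ do not die: the surviving terms are exactly those with one factor $D$ and $r-1$ factors $H_j$.

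The missing fact is what the paper records as $D\cdot D=0$. For a class $\beta$ supported on $|D|$ one has $D\cdot\beta=c_1(\co(D))\cap\beta$, and $\co(D)$ is trivialised by $t$. Hence every term with at least two factors $D$ vanishes. The term with no factor $D$ is supported on $\bigcap_i H_i=\emptyset$, so it vanishes too.

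For the surviving terms, use additivity and commutativity of intersections with Cartier divisors \cite[Ch.~2]{Fulton1998}. The term $m_iD\cdot\prod_{j\neq i}H_j$ equals $m_i\,D\cdot[\Sigma_i]$, where $\Sigma_i=\bigcap_{j\neq i}H_j\simeq\Spec R$ is a section meeting $D$ transversally in one $\kappa$-point. Summing gives $\sum_i m_i=v_R(\det\mu)$.

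\textbf{Your fallbacks would not work.}
\begin{enumerate}
\item Degrees must be taken by pushing forward from the support $Z(s)\subset D$ to $\Spec\kappa$, not to $\Spec R$. A closed point of the special fibre is the divisor of $t$ on a horizontal section, so it is rationally equivalent to zero on $\Pp(\cw_1)$ and on $\Spec R$.
\item For the same reason, the unlocalized class $c_r(E)\cap[\Pp(\cw_1)]$ is zero. No deformation-invariance argument applied to it can yield $\sum_i m_i$; this is exactly why the localized class is needed.
\item A length count of the zero scheme applies only in the non-excess case, where at most one $m_i$ is positive. Once two are positive, $Z(s)=D\cap\bigcap_{m_l=0}H_l$ is a projective space of positive dimension. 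That excess is precisely what the vanishing of $D\cdot\beta$ handles.
\end{enumerate}
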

\begin{proof}
  Let $\co_{\Pp(\cv)}(-1) \to \cv \otimes \co_{\Pp(\cv)}$ be the universal line bundle over $\Pp(\cv)$. Following through with the map $\cv \tos \cv/\cw_2$ and then twisting leads to a section
  \[
    \sigma \colon \co_{\Pp(\cv)} \to \cv/\cw_2 \otimes \co_{\Pp(\cv)}(1).
  \]
  Observe that the zero locus $Z(\sigma)$ of $\sigma$ is $\Pp(\cw_2)$, and as it is a regular section, the class $[\Pp(\cw_2)]$ coincides with the top Chern class of $\sigma$~\cite[\S14.1]{Fulton1998}. 

  Since $i \colon \Pp(\cw_1) \toi \Pp(\cv)$ is a local complete intersection, the desired intersection class can be computed as the degeneracy locus of the pullback section $i^*\sigma$~\cite[Prop. 14.1.d.ii]{Fulton1998},
  \[
    [\Pp(\cw_1)]\cdot [\Pp(\cw_2)] = i^*\Zloc(\sigma) = \Zloc(i^*\sigma)
  \]
  where $\Zloc(-) \in A_\bullet(Z(-))$ is the localized top Chern class of \emph{loc.\ cit.}\ (our $\Zloc(-)$ is written $\mathbb{Z}(-)$ there).

  Let $s \coleq i^*\sigma \colon \co_{\Pp(\cw_1)} \to E \coleq \cv/\cw_2 \otimes \co_{\Pp(\cw_1)}(1)$. Notice that $Z(s) \subset \Pp(\cw_1) \cap \Pp(\cw_2)$, and this intersection lies entirely in the central fibre $D \coleq \Pp(\cw_1 \otimes_R \kappa) \simeq \Pp^k_\kappa$, where $k+1 = \rank \cw_1$, of $\Pp(\cw_1)$.

The restriction of the universal line bundle from $\Pp(\cv)$ to $\Pp(\cw_1)$ factors through the universal line bundle on the latter, so that we may write $s$ as the composition
\[
  \co_{\Pp(\cw_1)} \to \cw_1 \otimes \co_{\Pp(\cw_1)}(1) \overset{\mu \otimes \id}{\too} \cv/\cw_2 \otimes \co_{\Pp(\cw_1)}(1),
\]
where the first is the universal line bundle. Note that the first map is injective everywhere and the latter is an isomorphism on $\Pp(W_1)$.

Use the Smith normal form (SNF) on $\mu$ to split the vector bundles as follows
\[
  \mu\otimes \id \colon \co_{\Pp(\cw_1)}(1)^{k+1} \xrightarrow{\diag(t^{\alpha_0}, \dots, t^{\alpha_k})} \co_{\Pp(\cw_1)}(1)^{k+1}  
\]
with $0 \le \alpha_0 \le \dots \le \alpha_k$.

If $\alpha_0 > 0$, that is $t \mid \mu$, then $Z(s)$ is the whole central fibre $D$, a divisor in $\Pp(\cw_1)$, and the residual formula~\cite[Example 14.1.4]{Fulton1998} gives
\[
  \Zloc(s) = \Zloc(s/t) + c_{k}(E)\cap D,
\]
where we used that $D \cdot D = 0$. Let $h = c_1(\co(1))$ be the hyperplane class; then $c_k(E) = c_k(\co(1)^{k+1}) = (k+1) h^k$, and $h^k \cap D = [\mathrm{pt}]$ is the class of a point in $A_0(D)$. 

The codomain of $s/t$ is $E \otimes \co(-D) \simeq E$ and therefore we can repeat this argument to find
\[
  \Zloc(s) = \Zloc(s/t^{\alpha_0}) + (k+1) \alpha_0 [\mathrm{pt}].
\]
When $\alpha_0 = 0$ this identity holds trivially and we may continue to the next step.

Let $s' = s/t^{\alpha_0}$ and write $E = L' \oplus E'$ where $L' = \co(1)$ is the first component in the SNF and $E'$ is the rest. Correspondingly, we decompose $s' = s_1 \oplus s_2$ with $s_1$ landing in $L'$ and $s_2$ in $E'$. Applying \cite[Example 14.1.3]{Fulton1998} we find
\[
  \Zloc(s') = h \cap \Zloc(s_2).
\]

Now we can repeat the process with $s_2$. If $\alpha_1-\alpha_0 > 0$ then $\Zloc(s_2)$ is supported on the divisorial locus $D$. This time we use $h \cap c_{k-1}(E') = k [\mathrm{pt}]$ to obtain
\[
  h \cap \Zloc(s_2) = h\cap \Zloc(s_2/t^{\alpha_1-\alpha_0}) + k(\alpha_1-\alpha_0)[\mathrm{pt}].
\]

We continue to decompose and factor out powers of $t$; the $j$-th step contributes $(k+1-j)(\alpha_j - \alpha_{j-1})[\mathrm{pt}]$, with $\alpha_{-1} \coleq 0$. The sum telescopes, giving
\[
  \Zloc(s) = ( \alpha_0 + \alpha_1 + \dots + \alpha_k )[\mathrm{pt}].
\]
Its degree is $v_R(\det \mu)$, which combined with Lemma~\ref{lem:wedge-det} concludes the proof.
\end{proof}

\begin{remark}
  After completing this work we learned that Theorem~\ref{introthm:dvr} also follows from \cite[Thm.~3.4]{Werner2001}, whose proof is of a rather different nature. Although \emph{loc.\ cit.}\ assumes $K$ to be a finite extension of $\Q_p$, it appears that their proof goes through without this hypothesis.
\end{remark}

\begin{proof}[Proof of Theorem~\ref{introthm:dvr}]
  Write $\hat\ca_i \coleq \hat A_i \cap \cv$ and $\hat\cb_j \coleq \hat B_j \cap \cv$; these are saturated in $\cv$, and $\underline{A}_i = \Pp(\hat\ca_i)$, $\underline{B}_j = \Pp(\hat\cb_j)$ are the Zariski closures of $A_i, B_j$ in $\pn_R$. Recall from Section~\ref{sec:intermezzo} that the cross-ratio is an alternating product of the wedges $a_i \wedge b_j \in \det V$; scale the generators so that $a_i$ generates $\det \hat\ca_i$ and $b_j$ generates $\det \hat\cb_j$.

  By Lemma~\ref{lem:wedge-det}, the valuation of $a_i \wedge b_j \in \det \cv$ equals $v_R(\det(\hat\ca_i \oplus \hat\cb_j \to \cv))$, which by Lemma~\ref{lem:intersection} is the intersection degree $\deg\!\big([\underline A_i] \cdot [\underline B_j]\big)$. Bilinearity of the intersection product now gives the claim.
\end{proof}

%% file: sections/04_limit.tex
\section{Perturbations of the cross-ratio}\label{sec:limit}

Throughout this section we use notation local to this section and to the linear algebra at hand, free of the decorations used in the introduction.

\subsection{The determinantal identity}

We will prove the classical expression~\eqref{eq:detV} for the reader's convenience and describe the isomorphism on individual elements for later use.

Let $K$ be a field and $V$ be a finite-dimensional $K$-vector space. Let $A, \, B \subset V$ be subspaces of complementary dimension, $\dim A + \dim B = \dim V$. Let $S = A \cap B$ and $Q = V/(A+B)$. 

Note the two short exact sequences
\[
  0 \to S \to A \oplus B \to (A+B) \to 0, \qquad 0 \to (A+B) \to V \to Q \to 0.
\]
Take determinants
\[
  \det A \otimes \det B \simeq  \det S \otimes \det(A+B), \qquad \det V \simeq  \det (A+B) \otimes \det Q.
\]
Eliminating $\det(A+B)$ we find
\begin{equation}
  \label{eq:detV_in_proof}
  \det V \simeq \det A \otimes \det B \otimes (\det S)^\vee \otimes \det Q \simeq \det A \otimes \det B \otimes \hom(\det S, \det Q).
\end{equation}

To be more precise, let $a \in \det A,\, b \in \det B, \, \phi \in \hom(\det S, \det Q)$ be generators. We will describe the image of $a \otimes b \otimes \phi$ in $\det V$. Pick a generator $s \in \det S$ and split $B = B' \oplus S$, then there is a unique generator $b' \in \det B'$ such that $b = b' \wedge s$. Then
\begin{equation}
  a \otimes b \otimes \phi = a \otimes (b' \wedge s) \otimes \phi \mapsto a \wedge b' \wedge \phi(s) \in \det V.
\end{equation}

\subsection{The limit perturbation}

Let $R$ be a DVR with fraction field $K$, residue field $\kappa$, valuation $v_R$, and uniformiser $t$. Fix a free $R$-module $\cv \simeq R^{n+1}$ and set $V \coleq \cv \otimes_R K$ and $V^s \coleq \cv \otimes_R \kappa$; throughout, a calligraphic letter denotes an $R$-lattice and the matching latin letter its base change to $K$. Each $K$-subspace $W \subset V$ spreads out to the saturated lattice $\cw \coleq W \cap \cv$, and $\Pp(\cw) \subset \pn_R$ is the Zariski closure of $\Pp(W) \subset \pn_K$.

Take a decomposition $V = W_1 \oplus W_2$, so that $\Pp(W_1)$ and $\Pp(W_2)$ are disjoint planes in $\pn_K$, and let $\Psi \colon \cw_1 \oplus \cw_2 \to \cv$ be the inclusion of their spread-out lattices. Since $\Psi$ recovers the isomorphism $W_1 \oplus W_2 \simeq V$ on the generic fibre, its cokernel is pure torsion. Write $W_i^s = \cw_i \otimes_R \kappa$, and set $S = W_1^s \cap W_2^s$ and $Q = V^s/(W_1^s + W_2^s)$.

\begin{proposition}\label{prop:limit-perturbation}
  The inclusion $\Psi$ induces an isomorphism $\det W_1^s \otimes \det W_2^s \simeq \det V^s$ and, therefore, an element $\xci \in \hom(\det S, \det Q)$ (the ``limit perturbation''), which is canonical up to the choice of $t \bmod \xm^2$, the uniformiser to first order.
\end{proposition}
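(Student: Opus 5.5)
The plan is to build $\xci$ from the determinant of $\Psi$, rescaled by the appropriate power of $t$, and then to read it off through the canonical identity~\eqref{eq:detV_in_proof} on the special fibre.

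First, take determinants over $R$. The lattices $\cw_1,\cw_2,\cv$ are free, of ranks summing to $n+1$, so $\det\Psi\colon \det\cw_1\otimes_R\det\cw_2\to\det\cv$ is an $R$-linear map between free rank-one $R$-modules. It is nonzero because $\Psi\otimes K$ is an isomorphism. Since $R$ is a DVR, $\det\Psi = t^{d}\cdot\theta$ for a unique isomorphism $\theta$ of rank-one $R$-modules, where $d=v_R(\det\Psi)\ge 0$. By Lemma~\ref{lem:wedge-det}, $d$ is the valuation $v_R(w_1\wedge w_2)$ for generators $w_i$ of $\det\cw_i$. Formation of $\det$ commutes with base change, so $\theta\otimes_R\kappa$ gives the isomorphism
\[
\bar\theta_t\colon \det W_1^s\otimes\det W_2^s \xrightarrow{\ \sim\ } \det V^s .
\]
Concretely, $\bar\theta_t(\bar w_1\otimes\bar w_2)=\overline{t^{-d}\,w_1\wedge w_2}$. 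This is the isomorphism claimed in the proposition. In the case $d=0$ it is simply the reduction of $\Psi$, and then $S=Q=0$.

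Next, apply the determinantal identity~\eqref{eq:detV_in_proof} over $\kappa$ to $A=W_1^s$ and $B=W_2^s$. These have complementary dimensions, and $S$ and $Q$ are exactly the spaces in that identity. It yields a canonical isomorphism $\det V^s\simeq \det W_1^s\otimes\det W_2^s\otimes\hom(\det S,\det Q)$. Composing it with the inverse of $\bar\theta_t$ trivialises the line $\hom(\det S,\det Q)$, and we let $\xci$ be the nonzero element mapping to $1$. Equivalently, $\xci$ is the unique element such that $w_1\otimes w_2\otimes\xci\mapsto \bar\theta_t(w_1\otimes w_2)$ for any generators $w_i$. Both sides are bilinear in $(w_1,w_2)$, so $\xci$ does not depend on the choice of generators, and it is nonzero because $\bar\theta_t$ is an isomorphism.

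Finally, check the dependence on $t$. Suppose $t'=ut$ with $u\in R^\times$. Then $t'^{-d}\det\Psi = u^{-d}\,t^{-d}\det\Psi$, so $\bar\theta_{t'}=\bar u^{-d}\bar\theta_t$ and hence $\xci$ is multiplied by $\bar u^{\,d}$ (up to the obvious sign convention). The residue $\bar u\in\kappa^\times$ depends only on $t'\bmod \xm^2$: indeed, $t'\equiv t''\bmod \xm^2$ forces $u'\equiv u''\bmod\xm$. Therefore $\xci$ depends on the uniformiser only to first order. I do not expect any serious obstacle here. The only point needing care is that the isomorphism has to be defined on the rank-one lattices before reducing mod $\xm$ (reducing $\Psi$ first would give the zero map on determinants when $d>0$), together with the bookkeeping of that rescaling.
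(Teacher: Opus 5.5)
Your proposal is correct and follows essentially the same route as the paper: rescale $\det\Psi$ by $t^{-v_R(\det\Psi)}$, reduce modulo $\xm$ to obtain the isomorphism $\det W_1^s\otimes\det W_2^s\simeq\det V^s$, and transport it through the determinantal identity~\eqref{eq:detV_in_proof} on the special fibre to get $\xci$. Your explicit check of the $t$-dependence expands the paper's one-line remark. One small slip: replacing $t$ by $ut$ multiplies $\xci$ by $\bar u^{-d}$ rather than $\bar u^{d}$, which does not affect the conclusion.
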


\begin{proof}
  The map $\det \Psi \colon \det \cw_1 \otimes \det \cw_2 \to \det \cv$ is a scalar of valuation $v_R(\det \Psi)$; or, more naturally, this valuation is the length of the cokernel of $\det \Psi$. 

 We define the regularisation of $\det \Psi$ as the map
  \[
    \reg_t \det \Psi \coleq \bigl(t^{-v_R(\det \Psi)} \det \Psi \bmod \xm\bigr) \colon \det W_1^s \otimes \det W_2^s \simeq \det V^s,
  \]
 which is an isomorphism, as the scaled map has cokernel of length $0$. Rewriting the determinantal identity~\eqref{eq:detV_in_proof} for the special fibre gives a canonical isomorphism
  \[
    \hom(\det W_1^s \otimes \det W_2^s, \det V^s) \simeq \hom(\det S, \det Q),
  \]
  under which $\reg_t \det \Psi$ corresponds to an element of $\hom(\det S, \det Q)$, the limit perturbation $\xci$ for this pair. Since $\reg_t$ depends on $t$ only through $t \bmod \xm^2$, so does $\xci$.
\end{proof}

\begin{remark}
  In fact, $\det \Psi$ induces a canonical isomorphism $\det W_1^s \otimes \det W_2^s \simeq \det V^s \otimes (\xm/\xm^2)^{\otimes v}$, where $v = v_R(\det \Psi)$, which is independent of the choice of a uniformiser.
\end{remark}

\begin{proof}[Proof of Theorem~\ref{introthm:reg}]
  Scale the generators so that $a_i$ generates $\det(\hat A_i \cap \cv)$ and $b_j$ generates
  $\det(\hat B_j \cap \cv)$, with reductions $a_i^s, b_j^s$ modulo $\xm$. For each pair,
  Proposition~\ref{prop:limit-perturbation} applied to $W_1 = \hat A_i$, $W_2 = \hat B_j$ produces
  $\xci[,ij]$. The construction of $\xci[,ij]$ gives the first equality below and Lemma~\ref{lem:wedge-det} the second:
  \[
    a_i^s \otimes b_j^s \otimes \xci[,ij] = (\reg_t \det \Psi)(a_i^s \otimes b_j^s) = \reg_t(a_i \wedge b_j) \ \in \det V^s,
  \]
  where we made the canonical identification~\eqref{eq:detV_in_proof}.

  As $\reg_t$ is multiplicative and the cross-ratio is the alternating product of the wedges $a_i \wedge b_j$, see Section~\ref{sec:intermezzo}, the alternating product of the left-hand sides is $(A_1^s, A_2^s; B_1^s, B_2^s)_{\xci}$ and of the right-hand sides is $\reg_t(A_1, A_2; B_1, B_2)$.
\end{proof}

\subsection{The arithmetic perturbation}

Let $K$ be a number field and $R = \co_K$ its ring of integers. Let $\cv$ be a free $R$-module of finite rank and $V = \cv \otimes_R K$. Let $A, B \subset V$ be subspaces of complementary dimension, with $S = A \cap B$ and $Q = V/(A+B)$. Spread out to saturated lattices $\cs, \ca, \cb \subset \cv$, with $\cs = S \cap \cv$ etc., and $\cq = \cv/(\ca + \cb)^{\sat}$.

Over a Dedekind domain a saturated submodule of a free module is projective but need not be free, so the determinant lines $\det \ca, \det \cb, \det \cs, \det \cq$ are invertible $R$-modules, possibly nontrivial in the class group. In general, they have no canonical generator. The following proposition suggests that perhaps we should, in general, take the arithmetic cross-ratio to be an alternating product of fractional ideals. However, we will later specialise to PIDs in order to get back numerical values.

Let $\cl \coleq \ca + \cb$ and $\cl' \coleq (\ca + \cb)^{\sat}$ and $\tau \coleq \cl'/\cl$, which is a finite-length $R$ module. We have $ \det \cl \subset \det \cl'$ and thus $\det \cl \otimes (\det \cl')^{-1} \subset R$, and the latter is the \emph{order ideal} $\chi(\tau)$ of $\tau$.

\begin{proposition}\label{prop:arithmetic_perturbation}
  Under the identification~\eqref{eq:detV_in_proof}, and the inclusions $\det \ca \subset \det A$ etc., the image of the natural map 
  \[
    \det \ca \otimes \det \cb \otimes \hom(\det \cs, \det \cq) \to \det \cv
  \]
  equals $\chi(\tau) \cdot \det \cv$.
\end{proposition}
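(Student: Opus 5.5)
The idea is to reduce to the case where $R$ is a DVR. There all the lattices are free, and the claim follows by running the proof of~\eqref{eq:detV_in_proof} integrally, one exact sequence at a time. Both sides of the claimed equality are invertible $R$-submodules of the one-dimensional $K$-space $\det V$. Two such submodules agree if and only if they agree after localising at every maximal ideal $\mathfrak p \subset R$. Localisation is flat, so it commutes with the relevant operations: intersections with $\cv$ (hence saturation), sums, $\det$, $\hom$ between invertible modules, quotients, and the formation of the order ideal, $\chi(\tau)_{\mathfrak p} = \chi(\tau_{\mathfrak p})$. So we may assume $R$ is a DVR. Then $\ca, \cb, \cs, \cl, \cl'$ and $\cq$ are free: they are either finitely generated submodules of the free module $\cv$, or, for $\cq$, torsion-free because $\cl'$ is saturated.

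Next we establish the integral versions of the two short exact sequences used for~\eqref{eq:detV_in_proof}. First, the sequence
\[
0 \to \cs \to \ca \oplus \cb \to \cl \to 0, \qquad x \mapsto (x,-x),
\]
is exact. The kernel of $\ca\oplus\cb \to \cv$ consists of the pairs $(x,-x)$ with $x \in \ca \cap \cb = A \cap B \cap \cv = \cs$. Second, the sequence $0 \to \cl' \to \cv \to \cq \to 0$ is exact by definition. Taking determinants of these sequences of free modules gives isomorphisms of free rank-one $R$-modules:
\[
\det \ca \otimes \det \cb \simeq \det \cs \otimes \det \cl, \qquad \det \cv \simeq \det \cl' \otimes \det \cq.
\]
After $\otimes_R K$ these are exactly the isomorphisms used to build~\eqref{eq:detV_in_proof}, because forming the determinant of a short exact sequence commutes with base change. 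Finally, inside $\det(A+B)$ we have $\det \cl = \chi(\tau)\cdot \det \cl'$. This is the definition of the order ideal of $\tau = \cl'/\cl$; concretely, by Smith normal form both sides are generated by $\det$ of the inclusion $\cl \to \cl'$ times a generator of $\det\cl'$.

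Combining these, the image of $\det\ca\otimes\det\cb\otimes\hom(\det\cs,\det\cq)$ in $\det V$ is computed in steps. It equals the image of
\[
\det\cs\otimes\det\cl\otimes(\det\cs)^\vee\otimes\det\cq = \det\cl\otimes\det\cq = \chi(\tau)\,\det\cl'\otimes\det\cq,
\]
which maps isomorphically onto $\chi(\tau)\cdot\det\cv$. Here the contraction $\det\cs\otimes(\det\cs)^\vee \to R$ is an isomorphism because $\det\cs$ is free of rank one.

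The main obstacle is the compatibility bookkeeping. We must check that the map in the statement, defined through the $K$-level identification~\eqref{eq:detV_in_proof} and the element-level description $a\otimes(b'\wedge s)\otimes\phi \mapsto a\wedge b'\wedge\phi(s)$, really is the base change of the composite of the integral isomorphisms above. Only then do lattices correspond to lattices. To handle this, I would fix an $R$-basis of $\cs$ and extend it to a basis of $\cb$, which is possible since $\cs \subset \cb$ is saturated. I would then verify directly that $b' \wedge s$ generates $\det\cb$ and that $a\wedge b'$ is the image of a generator of $\det\cl$ under $\det\cl\subset\det(A+B)$. Signs are irrelevant, since we only compare submodules.
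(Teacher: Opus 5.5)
Your proposal is correct and follows the paper's proof. Both arguments take determinants of the three short exact sequences $0\to\cs\to\ca\oplus\cb\to\cl\to 0$, $0\to\cl\to\cl'\to\tau\to 0$ and $0\to\cl'\to\cv\to\cq\to 0$, then combine the resulting identifications as lattices in $\det V$. The only differences are minor: you localise to a DVR so that all modules are free, whereas the paper works directly with invertible modules over the Dedekind domain, and you check explicitly, via the splitting $\cb=\cb'\oplus\cs$ with $\cl=\ca\oplus\cb'$, that the integral isomorphisms base-change to the element-level identification, a compatibility the paper leaves implicit.
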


\begin{proof}
  The short exact sequences
  \[
    0 \to \cs \to \ca \oplus \cb \to \cl \to 0, \quad
    0 \to \cl \to \cl' \to \tau \to 0, \quad
    0 \to \cl' \to \cv \to \cq \to 0
  \]
  give, upon taking determinants, the canonical isomorphisms
  \[
    \det \ca \otimes \det \cb \simeq \det \cs \otimes \det \cl, \quad
    \det \cl = \chi(\tau) \cdot \det \cl', \quad
    \det \cv \simeq \det \cl' \otimes \det \cq.
  \]
   Tensoring the first by $\hom(\det \cs, \det \cq)$, so that $\det \cs \otimes \hom(\det \cs, \det \cq) = \det \cq$, and combining,
  \[
    \det \ca \otimes \det \cb \otimes \hom(\det \cs, \det \cq) \simeq \det \cq \otimes \det \cl = \chi(\tau) \cdot \bigl(\det \cq \otimes \det \cl'\bigr) = \chi(\tau) \cdot \det \cv,
  \]
  as lattices in $\det V$.
\end{proof}

\begin{remark}\label{rem:tempt}
  For a complementary configuration of planes, it is tempting to define a cross-ratio as a fractional ideal, the alternating product of the $\chi(\tau)$'s.
\end{remark}

\medskip

\emph{Suppose for the rest of this section that $K$ has class number $1$}, i.e., $R$ is a principal ideal domain. Then every one of our determinant lines, in particular $\hom(\det \cs, \det \cq)$, is free. 

  \begin{definition}\label{def:arithmetic_perturbation}
  Take the \emph{arithmetic perturbation} $\xca$ to be a generator of $\hom(\det \cs, \det \cq)$, which is well-defined up to $R^\times$. 
\end{definition}

\begin{definition}\label{def:arithmetic_crossratio}
  For a degenerate configuration $A_i, B_j$ we pick arithmetic perturbations $\xca[,ij]$ for each pair $A_i,\, B_j$. And then define the \emph{arithmetic cross-ratio} to be
  \[
    (A_1, A_2; B_1, B_2)_K = \frac{a_1 \otimes b_1 \otimes \xca[,11]}{a_1 \otimes b_2 \otimes \xca[,12]} \, \frac{a_2 \otimes b_2 \otimes \xca[,22]}{a_2 \otimes b_1 \otimes \xca[,21]} \in K^\times/R^\times.
  \]
\end{definition}

Spreading out $A_i$ to $\ca_i$ etc., define the pure torsion $R$-module $\tau_{ij} \coleq (\ca_i + \cb_j)^{\sat}/(\ca_i + \cb_j)$.

\begin{lemma}\label{lem:norm_of_crossratio}
  The norm of the arithmetic cross-ratio is well-defined and equals
  \[
    \bigl\lvert N(A_1, A_2; B_1, B_2)_K \bigr\rvert = \frac{\#\tau_{11} \, \#\tau_{22}}{\#\tau_{12} \, \#\tau_{21}} \in \Q_{>0}.
  \]
\end{lemma}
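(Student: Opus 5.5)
Write each of the four factors $a_i\otimes b_j\otimes\xca[,ij]$ as an element of $\det V$ and compute the fractional ideal it generates, using Proposition~\ref{prop:arithmetic_perturbation}. Since $R$ is a PID, all determinant lines are free, and we take $a_i$, $b_j$ to be generators of $\det\ca_i$, $\det\cb_j$. Changing $a_i$, $b_j$ or $\xca[,ij]$ by units of $R$ changes each factor by a unit. The cross-ratio is an alternating product in which each $a_i$ and each $b_j$ appears once in the numerator and once in the denominator, so it is well-defined in $K^\times/R^\times$. Its norm is therefore well-defined in $\Q^\times/\{\pm1\}$, because units have norm $\pm1$; in particular $|N(\cdot)|$ is well-defined.

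Fix a generator $e$ of $\det\cv$. The $R$-module $\det\ca_i\otimes\det\cb_j\otimes\hom(\det\cs_{ij},\det\cq_{ij})$ is free of rank one, generated by $a_i\otimes b_j\otimes\xca[,ij]$. By Proposition~\ref{prop:arithmetic_perturbation}, its image in $\det V$ is $\chi(\tau_{ij})\cdot\det\cv$. Hence
\[
  a_i\otimes b_j\otimes\xca[,ij]=x_{ij}\,e,\qquad (x_{ij})=\chi(\tau_{ij})\subset R .
\]
One point to check is that the map in the proposition is injective on this rank-one module, so that it really identifies the generator. This holds because after tensoring with $K$ it is the isomorphism~\eqref{eq:detV_in_proof}. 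The arithmetic cross-ratio is then $x_{11}x_{22}/(x_{12}x_{21})$ modulo units, the common factor $e$ cancelling in each ratio in the one-dimensional space $\det V$.

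Taking absolute norms gives $|N_{K/\Q}(x_{ij})|=\#(R/\chi(\tau_{ij}))$. The remaining step is the standard fact that the order ideal of a finite-length module over a Dedekind domain has index equal to the cardinality of the module: $\#(R/\chi(\tau))=\#\tau$. To prove it, filter $\tau$ with simple quotients $R/\mathfrak p$. Both sides are multiplicative in short exact sequences, $\chi$ because determinants are, and $\#$ obviously. For $\tau=R/\mathfrak p$ one has $\chi(\tau)=\mathfrak p$, and both sides equal $N\mathfrak p$. Multiplicativity of the norm then gives
\[
  \bigl|N(A_1,A_2;B_1,B_2)_K\bigr|=\frac{\#\tau_{11}\,\#\tau_{22}}{\#\tau_{12}\,\#\tau_{21}} .
\]

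The main obstacle is being careful that the identification~\eqref{eq:detV_in_proof} used in Definition~\ref{def:arithmetic_crossratio} over $K$ is the same one to which Proposition~\ref{prop:arithmetic_perturbation} applies integrally. This includes the case $\cs_{ij}=0$, where $\tau_{ij}=\cv/(\ca_i\oplus\cb_j)$ and the statement reduces to Lemma~\ref{lem:wedge-det}-type index counting. Beyond that, the only subtlety is the identity $\#(R/\chi(\tau))=\#\tau$.
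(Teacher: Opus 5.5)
Your proposal is correct and follows the paper's proof. Like the paper, you use Proposition~\ref{prop:arithmetic_perturbation} to identify each $a_i\otimes b_j\otimes\xca[,ij]$ with a generator of $\chi(\tau_{ij})$ times a generator of $\det\cv$, and then take norms. Beyond the paper, you also check injectivity of the integral map and prove $\#(R/\chi(\tau))=\#\tau$ via a composition series, which the paper simply asserts.
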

\begin{proof}
  Choosing generators $a \in \det \ca$ and $b \in \det \cb$, the element $a \otimes b \otimes \xca$ can be identified, via~\eqref{eq:detV_in_proof} and Proposition~\ref{prop:arithmetic_perturbation}, with a generator of the principal ideal $\chi(\tau)$. The norm of this generator will equal $\#\tau$. When computing the cross-ratio, the individual choices for $a$'s and $b$'s do not influence the result, and therefore we can always assume they are integral generators.
\end{proof}